\newcommand{\comment}[1]{}
\newtheorem{theorem}{Theorem}
\newtheorem{lemma}[theorem]{Lemma}
\newtheorem{proposition}[theorem]{Proposition}
\newtheorem{corollary}[theorem]{Corollary}
\theoremstyle{definition}
\newtheorem{definition}[theorem]{Definition}
\newtheorem{example}[theorem]{Example}
\newcommand{\Z}{\mathbb{Z}}
\newcommand{\cross}{\times}
\newcommand{\PG}{\mathbb{PG}}
\newcommand{\T}{\mathbb{T}}
\newcommand{\stab}{\operatorname{Stab}}
\newcommand{\orb}{\operatorname{Orbit}}
\newcommand{\PD}{\mathbb{PD}}
\begin{document}
\title{An Enhanced Prime Decomposition Theorem for Knots with Symmetry Information}
     
\author{Matt Mastin}

\begin{abstract} 
We present an enhanced prime decomposition theorem for knots that gives the isotopy classes of composite knots that can be constructed from a given list of prime factors (allowing for the mirroring and orientation reversing for each factor). Underlying the theorem is an algebraic construction that also allows for the computation of the intrinsic symmetries (invertibility, chirality, etc.) of a composite knot from those of the prime factors. We then use this construction to give a table of composite knots through $12$ crossings that can be constructed from prime factors through $9$ crossings. This is more difficult than it might sound because we must take knot symmetries into account when generating the table (the square knot and the granny knot are different, though both are connect sums of two trefoils). The completeness of this table depends on the conjectural additivity of crossing number under the connected sum.
\end{abstract}


\maketitle
\section[Introduction]{Introduction}

Knots can have one of five symmetry types: (+) amphichiral, (-) amphichiral, reversible, no symmetry, and full symmetry. The symmetry type of each prime knot is known through high crossing number \cite{Cerf}, \cite{Kod1},\cite{VIGRE}. For composite knots, new symmetries may arise or be destroyed by combining knots. For example, the trefoil is a reversible (but not amphichiral) knot, so it comes in left- and right-handed versions. However, the square knot (the connect sum of a left- and right-handed trefoil) has full symmetry while the granny knot (the connect sum of two trefoils of the same handedness) is again only reversible. Wilbur Whitten gave a theorem in 1969 \cite{Whit} giving necessary and sufficient conditions for a given symmetry to apply to a composite knot in terms of the symmetry groups of the prime factors of the knot, but the statement of Whitten's theorem encodes much of its content in a very complicated system of indices, making it hard to apply. Here we give an enhanced prime decomposition theorem which allows us to calculate the full set of composites arising from a set of prime factors and their symmetry groups as orbits and stabilizers of a natural group action.

Prime knot tabulation has been given much attention\cite{Kirk1}, \cite{First}, \cite{Rolfsen}, but there is a striking lack of composite knot tables in the literature. This is perhaps somewhat surprising given the many applications in which composite knots arise naturally, such as the knotted strands of DNA discussed by Arsuaga, et al. \cite{Ars1}. From a topological perspective connected sum decompositions of links in $3$-manifolds could be considered inconvenient due to the ambiguity in embedding disjoint $2$-spheres into $S^3$. This ambiguity led $3$-manifold topologists to consider torus decompositions and, in particular, the JSJ-decomposition of link complements in $S^3$ (see, for example, the manuscript of Bonahon and Siebenmann \cite{BS1}, the excellent paper of Ryan Budney \cite{Bud1} or the original papers of Jaco and Shalen \cite{JSJ1} and Johannson \cite{JSJ2}). While this toroidal perspective is convenient for studying many properties of knots and links, the prime decomposition is obfuscated by this viewpoint. For knots the prime summands can be detected from the JSJ-decomposition of the complement and it is this fact that we exploit presently to construct an algebraic structure within which to state an enhanced version of the prime decomposition theorem for knots. We then apply this theorem to the problem of tabulating composite knots. 

It is an open question (and a current research project of the author) to give a relationship between the JSJ-decomposition of a link complement and the link's prime factorization. The main obstruction to this generalization is the increased complexity of the JSJ-decomposition of link complements in $S^3$. 

Section \ref{sect:JSJGraph} will be dedicated to giving a brief survey of the relationship between the JSJ-decomposition of a knot complement and the prime factorization of the knot. Then, in section \ref{sect:PDT} we define the combinatorial object that we will use to describe composite knots. We will be working with knot \emph{diagrams} described by a notation called PD-codes. In section \ref{sect:sym}, discuss the \emph{intrinsic symmetries} of knots in terms of PD-codes. We use the term intrinisic symmetry to refer to the invertibility and/or chirality of a knot. 

In section \ref{sect:decomp} we will state and prove an enhanced prime decomposition theorem for knots, the main theorem of the paper. The classical prime decomposition theorem \footnote{The original paper of Schubert \cite{Shub2} has not been translated, but there is a nice survey of the result by Sullivan \cite{Sul1}} states that a given composite knot has a unique set of prime factors. This enhanced theorem addresses the question of what composite knots are obtained from a given list of prime knots if you are allowed to reverse the orientation and/or mirror each prime factor. In particular, it gives the isotopy classes of knots with a given list of prime factors (up to mirroring the crossings and reversing orientation) as the collection of orbits of a certain group action. Section \ref{sect:CompKnotSyms} is dedicated to the intrinsic symmetry group of a composite knot in terms of the symmetry groups of the prime factors.  We close with a discussion of future work, the most important of which is the generalization of the techniques described here to the case of composite links. The appendix gives a table of composite knots that can be constructed by connected sums of prime knots with $9$ or fewer crossings as well as some details of the computations involved. This table is complete under the conjecture that crossing number is additive under connected sum.


\section[Prime Decomposition Graphs]{Prime Decomposition Graphs}\label{sect:JSJGraph}

The results in this section either appear in or are corollaries of results in a paper of Ryan Budney~\cite{Bud1}.

In order to translate the computation of composite knot symmetries from topology to combinatorics we will define a tree associated to a composite knot. The well-definedness of this construction will depend on a specialized construction related to the JSJ-decomposition of the knot complement. The interested reader is directed to Budney~\cite{Bud1} for full details.

Given a composite knot $K$ we can decompose its complement $C_K$ by embedding solid tori $\cup T_i$ as shown in Figure~\ref{fig:companions}. These tori are the swallow-follow tori corresponding to each prime factor of $K$. It is important to note that the swallow-follow tori are not necessarily the only tori appearing in the full JSJ-decomposition. For example, if one of the prime factors was the Whitehead double of some knot, then we would see additional tori in the JSJ-decomposition which are not swallow-follow tori, but are representing doubling as a satellite operation (that is not connected sum). However, the swallow-follow tori for each prime factor are precisely the tori in the JSJ-decomposition which are adjacent to the knot complement. In fact, the companion link to the $3$-manifold co-bounded by a neighborhood of $K$ and these swallow-follow tori is the link given in the following definition. The companions corresponding to the $3$-manifolds bounded by each swallow-follow tori are simply the prime factors of $K$ as these $3$-manifolds are already knot complements.

%
%
%
%
%
%
%
%
%
%
%


\begin{definition}
We denote by $H^p$ the $(p+1)$-component \textbf{keychain link} (shown in Figure \ref{fig:keychain}) and denote the $i$th component of $H^p$ by $H_i^p$.  The compononents are oriented so that $\operatorname{Lk}(*,H^p_i)=+1$ for $i=1,\ldots,p$, where $\operatorname{Lk}$ is the standard linking number. 
\end{definition}

\begin{figure}
\begin{center}
\includegraphics[height=3cm]{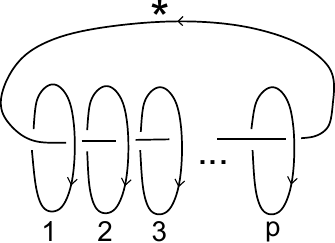}
\caption{\label{fig:keychain}Here we see a $(p+1)$ component keychain link link. Notice that the orientations have been chosen so that all crossings are positive.}
\end{center}
\end{figure}


\begin{figure}
\begin{center}
\includegraphics[height=4cm]{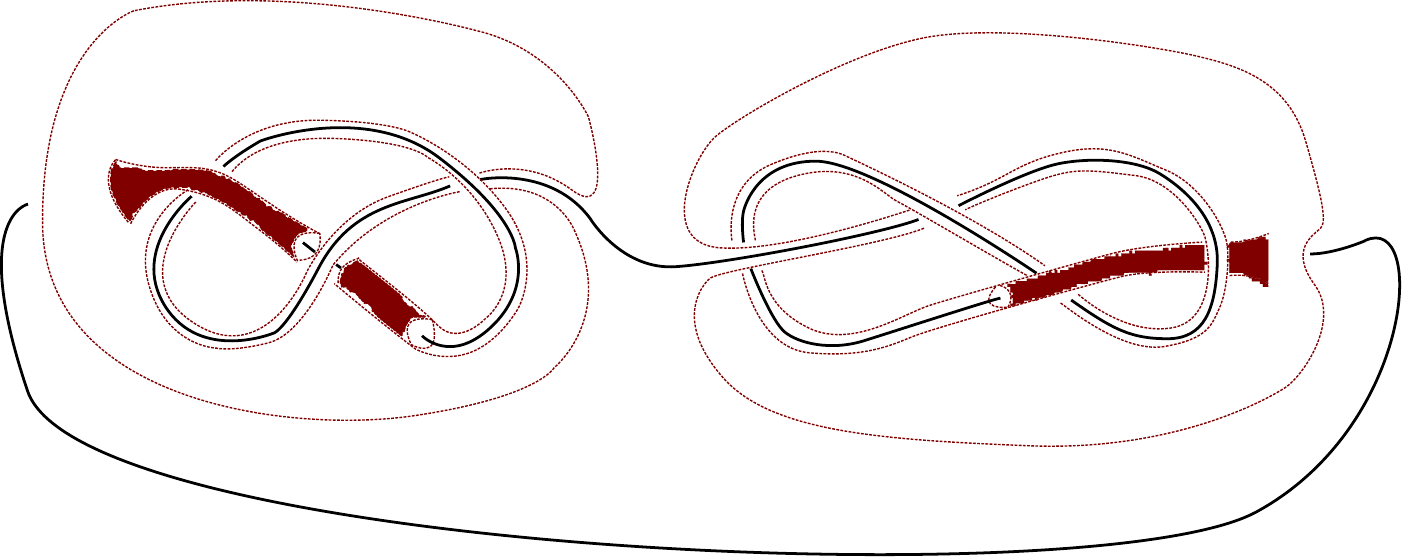}
\caption{\label{fig:companions}This figure, similar to the one appearing in Budney~\cite{Bud1}, shows the JSJ-decomposition of a composite knot. The tori shown are the swallow-follow tori and, in this case, these are the only tori in the JSJ-decomposition. }
\end{center}
\end{figure}


If we restrict our attention to only the swallow-follow tori we can define a simplified version of the JSJ and splicing graphs described by Budney. Such graph constructions seem to be first given by Siebenmann \cite{Sieb1} as well as Eisenbud and Neumann \cite{EN1}. 

\begin{definition}\label{def:PDT}
A \textbf{prime decomposition tree} is a depth one rooted tree $\PG(K)$ with root labeled by the link $H^p$ whose children are ordered vertices labeled by oriented prime knots (see Figure~\ref{fig:comp_tree}). We denote the knot corresponding to vertex $v$ by $\PG(v)$. We say that two prime decomposition trees $\PG$ and $\PG'$ are equivalent, denoted $\PG \sim \PG'$ , if there exists an isomorphism of rooted trees $g:\PG \rightarrow \PG'$ such that the knots $\PG(v)$ and $\PG'(g(v))$ are isotopic. 
\end{definition}

The following result is a restatement of Proposition $4.6$ in Budney \cite{Bud1}.

\begin{proposition}\label{prop:Bud1}
Two knots $K_1$ and $K_2$ are isotopic if and only if $\PG(K_1) \sim \PG(K_2)$.
\end{proposition}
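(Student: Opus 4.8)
The plan is to derive the statement from two facts recalled above: uniqueness of the JSJ decomposition of a knot exterior together with Budney's identification of the swallow--follow tori inside it (for the forward direction), and the classical prime decomposition theorem (for the reverse). Throughout I take ``isotopic'' to mean ambient isotopic as oriented knots in the oriented sphere $S^3$, and I regard $\PG(K)$ as built from a fixed orientation on $K$.

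For the forward direction I would first check that $\PG(K)$ is well defined up to $\sim$: the construction involves only a choice of the JSJ tori of $C_K$ (canonical up to isotopy of $C_K$ by the JSJ theorem) and a choice of ordering of the prime factors (exactly what the rooted-tree isomorphism in \defn{PDT} is allowed to permute). Using that the swallow--follow tori are precisely the JSJ tori of $C_K$ incident to the knot exterior, their number $p$ and the homeomorphism types of the complementary pieces are invariants of $C_K$; each such piece is the exterior of a prime knot, recovered as an oriented knot in $S^3$ by filling along its topologically distinguished meridian and transporting the orientation of $K$. Then, given an ambient isotopy $\phi$ of $S^3$ with $\phi(K_1)=K_2$, I would restrict its time-one map to an orientation-preserving homeomorphism $C_{K_1}\to C_{K_2}$; naturality of the JSJ decomposition sends swallow--follow tori to swallow--follow tori and prime-factor pieces to prime-factor pieces, yielding a rooted-tree isomorphism $g:\PG(K_1)\to\PG(K_2)$ that carries the label of each leaf $v$ to an isotopic label at $g(v)$ and both roots to the label $H^p$; hence $\PG(K_1)\sim\PG(K_2)$.

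For the reverse direction I would use the splicing description of $C_K$ --- the keychain-link exterior $S^3\setminus H^p$ reassembled with copies of the prime exteriors along the swallow--follow tori --- to exhibit $K$ as the connected sum $\PG(v_1)\#\cdots\#\PG(v_p)$ of the oriented prime knots labelling the leaves $v_1,\dots,v_p$ of $\PG(K)$ (the unknot if $p=0$). An equivalence $\PG(K_1)\sim\PG(K_2)$ then identifies the two multisets of oriented prime summands, so $K_1$ and $K_2$ are isotopic by Schubert's prime decomposition theorem \cite{Shub2}, which says an oriented knot is determined up to isotopy by that multiset.

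The hard part is entirely in the forward direction, and specifically in the well-definedness it rests on: that the swallow--follow tori of the prime factorization are exactly the JSJ tori meeting $C_K$, that distinct swallow--follow tori are non-parallel (so that $p$ genuinely counts the prime factors and no spurious keychain vertices appear), and that the central piece is always the keychain exterior $S^3\setminus H^p$ rather than a piece recording additional satellite structure. These are exactly the inputs provided by Budney's analysis in \cite{Bud1}; granting them, the remainder is the bookkeeping of orientations and orderings sketched above, and the statement follows from Proposition~4.6 of \cite{Bud1} (of which it is essentially a restatement) together with \cite{Shub2}.
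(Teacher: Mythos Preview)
The paper does not give its own proof of this proposition: it is stated as a restatement of Proposition~4.6 of Budney~\cite{Bud1}, with no argument supplied. So there is nothing line-by-line to compare your proposal against.

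That said, your sketch is correct and is essentially an unpacking of what goes into Budney's result together with Schubert's theorem. You have correctly located the only nontrivial content in the forward direction---namely, that $\PG(K)$ is well defined up to $\sim$ because the swallow--follow tori are exactly the JSJ tori incident to the outermost piece of $C_K$, that this outermost piece is the keychain-link exterior, and that the companion pieces are the prime-factor exteriors---and you correctly attribute these facts to \cite{Bud1}. Your reverse direction via Schubert is the same device the paper itself uses later (in the proof of \thm{PGEquiv}). One small remark: in the forward direction you do not actually need Schubert at all, since an ambient isotopy already transports the entire JSJ structure; you only invoke Schubert in the reverse direction, which is appropriate.

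In short: your proposal is sound, and it supplies more detail than the paper does, since the paper simply defers to Budney.
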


%
%

\begin{figure}
\begin{center}
\includegraphics[height=3cm]{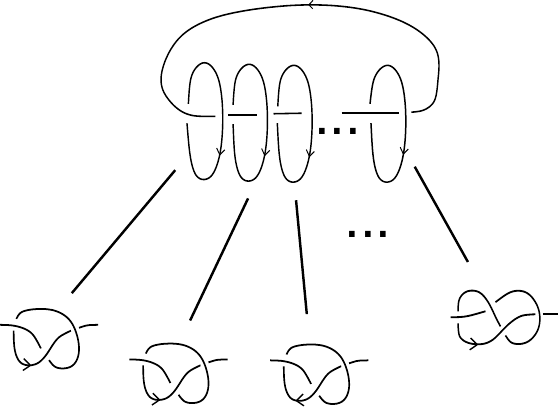}
\caption{\label{fig:comp_tree}An example of a prime decompostion tree.}
\end{center}
\end{figure}

A knot can be recovered from a prime decomposition tree via the splicing operation, but the details of the construction are not needed to proceed. These details can be found in Budney \cite{Bud1}, and we denote the knot obtained via splicing according to $\PG$ by $S(\PG)$.

\begin{theorem}{\label{thm:PGEquiv}}
Let $\PG$ and $\PG'$ be prime decomposition trees, then the knots $S(\PG)$ and $S(\PG')$ are isotopic if and only if $\PG$ and $\PG'$ are equivalent as prime decomposition trees.
\end{theorem}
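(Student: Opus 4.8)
The plan is to reduce the statement to Proposition~\ref{prop:Bud1} by isolating a single structural claim: that splicing inverts the passage to prime decomposition trees, i.e.\ for every prime decomposition tree $\PG$ one has $\PG(S(\PG)) \sim \PG$. Granting this claim, the theorem is immediate: by Proposition~\ref{prop:Bud1} the knots $S(\PG)$ and $S(\PG')$ are isotopic if and only if $\PG(S(\PG)) \sim \PG(S(\PG'))$, and since $\sim$ is an equivalence relation we may replace $\PG(S(\PG))$ by $\PG$ and $\PG(S(\PG'))$ by $\PG'$, obtaining $S(\PG)\cong S(\PG') \iff \PG\sim\PG'$.

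So everything reduces to the claim $\PG(S(\PG)) \sim \PG$. First I would record the defining feature of the keychain link: splicing the complements $C_{\PG(v_1)},\dots,C_{\PG(v_p)}$ of the leaf knots into the $p$ ``key'' boundary tori of the complement of $H^p$ produces exactly the connected sum $\PG(v_1)\#\cdots\#\PG(v_p)$, with the distinguished component $*$ of $H^p$ becoming the resulting knot. This is the standard realization of connected sum as a splice (Eisenbud--Neumann~\cite{EN1}, Budney~\cite{Bud1}), and it identifies $K:=S(\PG)$ as the connected sum of the oriented prime knots labeling the leaves of $\PG$. Next I would identify $\PG(K)$ via the construction recalled in Section~\ref{sect:JSJGraph}: the non-root vertices of $\PG(K)$ correspond to the JSJ tori of $C_K$ adjacent to the knot complement, which by Budney's analysis are precisely the swallow-follow tori of $K$, one for each prime factor of $K$, the companion cut off by each being the corresponding prime factor, while the submanifold of $C_K$ co-bounded by a neighborhood of $K$ together with all the swallow-follow tori is the complement of $H^p$, so the root of $\PG(K)$ carries the label $H^p$. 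Since the classical prime decomposition theorem, together with its uniqueness clause for oriented knots, identifies the multiset of prime factors of $K=\PG(v_1)\#\cdots\#\PG(v_p)$ with $\{\PG(v_1),\dots,\PG(v_p)\}$, there is a bijection between the leaves of $\PG(K)$ and those of $\PG$ matching the oriented-knot labels up to isotopy; as $\PG(K)$ and $\PG$ are both depth-one trees with root labeled $H^p$, this bijection is an equivalence $\PG(K)\sim\PG$.

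I would also note that the ``if'' direction can be seen directly and cheaply: if $\PG\sim\PG'$ then the two trees carry the same multiset of leaf knots up to isotopy, so $S(\PG)$ and $S(\PG')$ are connected sums of the same oriented prime knots in possibly different orders, hence isotopic by commutativity and associativity of the connected sum (equivalently, the permutations of the ``key'' components are symmetries of $H^p$ that carry one splice presentation to the other). The genuine content of the theorem therefore sits entirely in the key claim $\PG(S(\PG))\sim\PG$, and the main obstacle is the structural input from Budney~\cite{Bud1}: that the swallow-follow tori are exactly the JSJ tori neighboring the knot complement, and that the splice recovers the knot from this adjacency data. Once that is in place, the rest is bookkeeping built on the uniqueness of prime decomposition.
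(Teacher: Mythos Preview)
Your proof is correct and uses the same two ingredients as the paper's own argument---Proposition~\ref{prop:Bud1} together with the classical uniqueness of prime decomposition. The paper does not isolate the claim $\PG(S(\PG))\sim\PG$ as a separate step but uses it tacitly, so your organization is in fact a cleaner packaging of the same proof.
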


\begin{proof}
Suppose that $S(\PG) \sim S(\PG')$, then by Proposition~\ref{prop:Bud1} we have that  $\PG(S(\PG)) \sim \PG'(S(\PG))$. So, in particular the knots have the same prime factors. Thus, by the uniqueness of the prime factorization of knots there is a permutation of the leaves of $\PG(S(\PG))$ so that we obtain $\PG'$. 

On the other hand if $\PG \sim \PG'$, then we know immediately that $S(\PG)$ and $S(\PG')$ have the same prime factors and are thus isotopic by the uniqueness of the prime factorization of knots. 
\end{proof}

It is interesting to note that although we use the classical prime decomposition theorem in this proof (for simplicity) one could avoid it and use only the uniqueness of the JSJ-decomposition to obtain a proof of the classical prime decomposition theorem.

\section[Prime Diagram Trees]{Prime Diagram Trees}\label{sect:PDT}

In order to have a unique description of each prime knot type we will from here on be working with particular diagrams instead of equivalence classes of space curves. In particular, we will use Planar Diagram codes (PD-codes) to describe our knot diagrams. PD-codes seem to have been first defined by Dror Bar-Natan for use in the KnotTheory Mathematica Package. The details of the construction have been worked out by the author and can be found in \cite{Ma2}, but the basic definition is included here for convenience.

\begin{definition}\label{def:PD}
Given a knot diagram on an oriented surface $S$, we generate the set of quadruples of the \textbf{PD-code} representing this diagram by the following procedure. For each crossing we include the quadruple of arc labels involved beginning with the incoming under-edge and proceeding around the crossings in the positively oriented direction of $S$ (see Figure~\ref{fig:PD}). We give a positive sign to incoming edges and a negative sign to outgoing edges.

\begin{figure}
\begin{center}
\begin{overpic}[height=4cm]{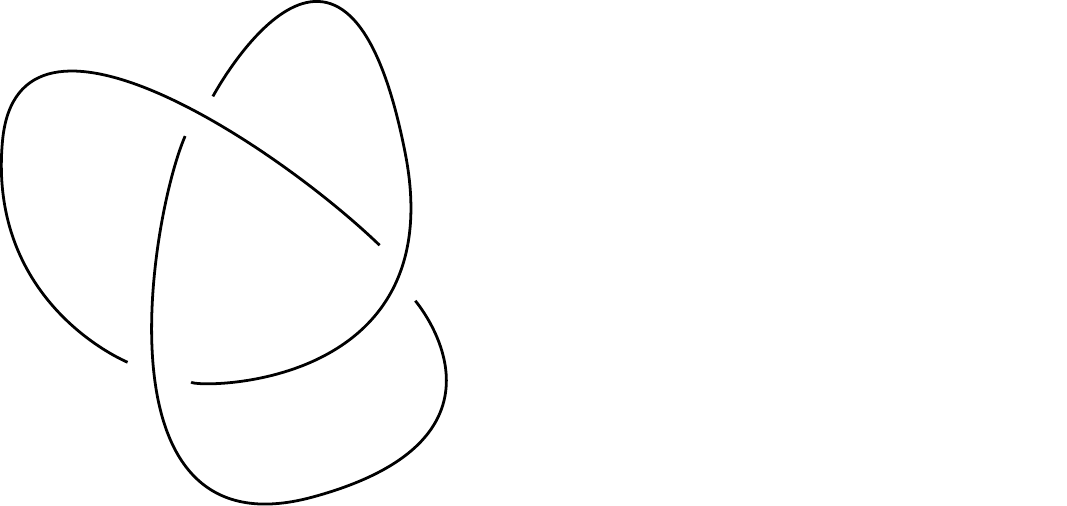}
\put(40,35){$1$}
\put(28,15){$2$}
\put(-3,30){$3$}
\put(28,33){$4$}
\put(35,-1){$5$}
\put(16,21){$6$}
\put(55,27){$\{ [+4,-2,-5,+1],$}
\put(57.5,21){$[+2,-6,-3,+5],$}
\put(57.5,15){$[+6,-4,-1,+3]\}$}
\end{overpic}
\end{center}
\caption{\label{fig:PD} A diagram for $3_1$ and its PD-code. The labels are only single integers here as there is only one component. Note that we may omit directional arrows as the orientation can be inferred from the ordering of the edge labels.}
\end{figure}
\end{definition}

We now define the combinatorial object that is analogous to the prime decomposition tree except that we label our vertices with PD-codes as opposed to knots.

\begin{definition}\label{def:PDiagT}
A \textbf{prime diagram tree} is a depth $1$ rooted tree whose vertices are ordered and labeled by PD-codes from the prime knot table. We denote the PD-code at vertex $v$ by $\PD(v)$ and the corresponding knot $k(v)$. The vertices respect a chosen ordering on base types so that if $i \leq j$, then $k(v_i) \leq k(v_j)$. We say that two prime diagram trees $T_1$ and $T_2$ are equivalent (denoted $T_1 \sim T_2$) if there is an isomorphism of trees $\phi: T_1 \rightarrow T_2$ such that $k(\phi(v))\sim k(v)$ for every vertex $v \in T_1$.
\end{definition}


Defining a connected sum of PD-codes essentially amounts to choosing a convention for dealing with the indices. We choose the conventions laid out in the following definition.

\begin{definition}\label{def:PDSum}

Consider an ordered list of knot PD-codes $(D_1, D_2)$ where the first has $n_1$ arcs and the second has $n_2$ arcs. We define the \textbf{connected sum of the PD-codes} to be the PD-code obtained by the following procedure (cf. Figure~\ref{fig:sum}).

\begin{enumerate}
	\item add $n_2$ to each \emph{positive} label of $D_1$ and subtract $n_2$ to each negative label of $D_1$ except label $-1$
	\item change the $-1$ to a $-(n_2+1)$ in the quadruple of $D_2$ which also contains $+n_2$
	\item concatenate $D_1$ and $D_2$
\end{enumerate}
\end{definition}

The conventions chosen in Definition \ref{def:PDSum} give the PD code of the connected sum that maintains the edge labels of $D_2$, changes the edge labels of $D_1$ to be consecutive with those in $D_2$, and performs the connected sum along the edges labeled $1$ (as shown in Figure \ref{fig:sum}).


\begin{figure}
\begin{center}
\begin{overpic}[height=6cm]{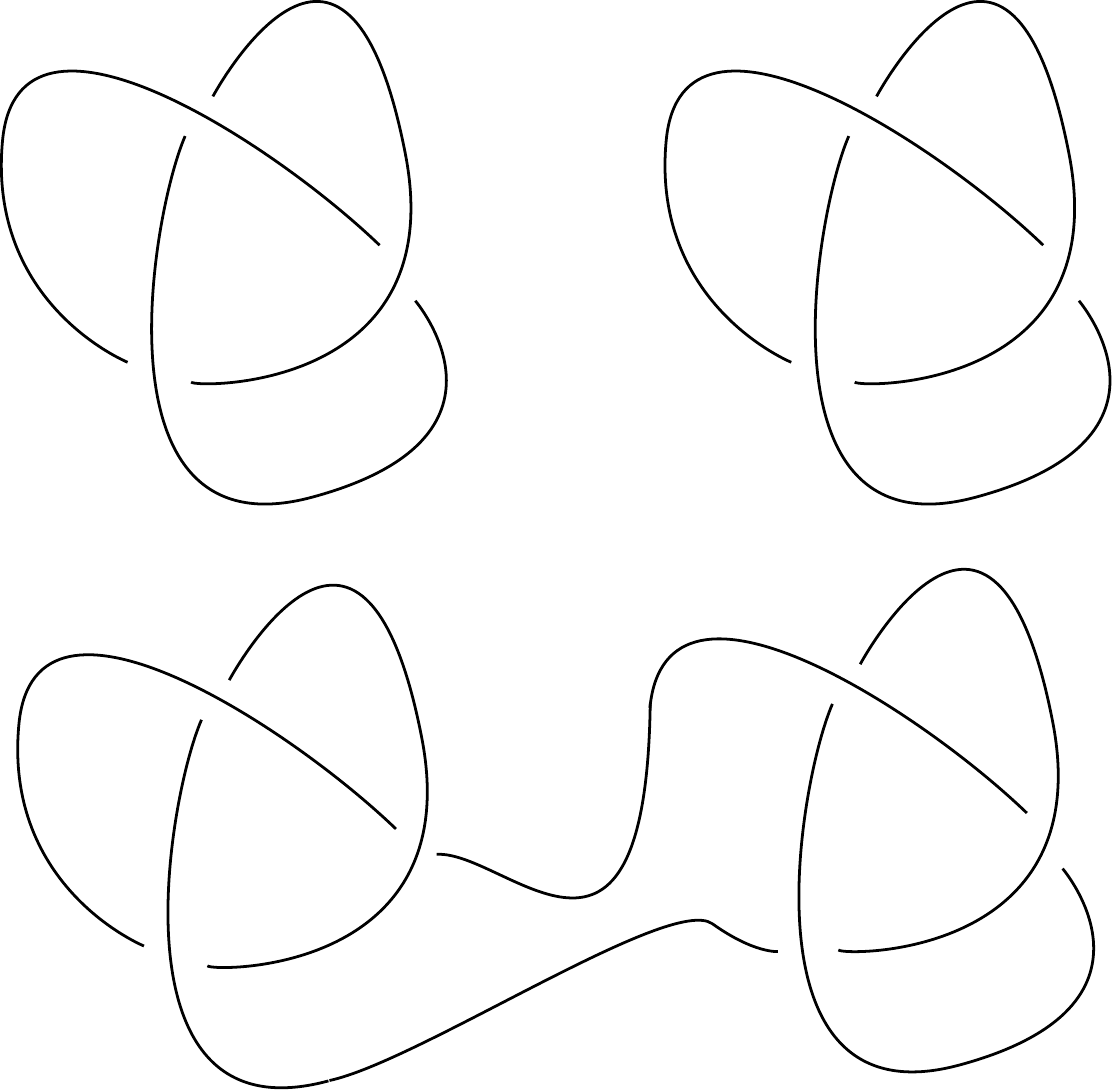}
\put(-18,91){$D_1$}
\put(109,91){$D_2$}
\put(-25,38){$D_1 \# D_2$}

\put(42,60){$1$}
\put(16,72){$2$}
\put(37,90){$3$}
\put(30,60){$4$}
\put(-5,75){$5$}
\put(25,85){$6$}

\put(56,75){$1$}
\put(86,83){$2$}
\put(101,60){$3$}
\put(75,71){$4$}
\put(96,90){$5$}
\put(90,60){$6$}

\put(53,28){$1$}
\put(86,32){$2$}
\put(100,9){$3$}
\put(75,21){$4$}
\put(96,40){$5$}
\put(88,9){$6$}
\put(52,6){$7$}
\put(17,20){$8$}
\put(38,37){$9$}
\put(30,7){$10$}
\put(-6,27){$11$}
\put(27,31){$12$}

\end{overpic}
\end{center}
\caption{\label{fig:sum} The connected sum of the diagrams of two right-handed trefoils.}
\end{figure}

\begin{example}
	Figure \ref{fig:sum} shows the connected sum of two right-handed trefoils. The PD-codes for these two knots are as follows.
 $$\{[+2,-6,-3,+5],[+6,-4,-1,+3],[+4,-2,-5,+1]\} $$  $$\{[+4,-2,-5,+1],[+2,-6,-3,+5],[+6,-4,-1,+3]\}$$ 

	Note that these are the \emph{same} PD-codes as the diagrams in Figure \ref{fig:sum} are related by a rotation of the page (or more appropriately a diffeomorphism of $S^2$ which is isotopic to the identity). We now apply the procedure outlined in Definition \ref{def:PDSum} in order to take the connected sum of these knots letting the first be $D_1$ and the second be $D_2$. Note that $n_1 = n_2 = 6$.
	\begin{enumerate}
		\item We first add $n_2 = 6$ to each label of $D_1$ except the label $-1$. The result is as follows.
			$$\{[+8,-12,-9,+11],[+12,-10,-7,+9],[+10,-8,-11,+7]\}$$
		\item Next, we change the $-1$ to $-(n_2+1) = -7$ in $D_2$ to obtain the following.
			$$\{[+4,-2,-5,+1],[+2,-6,-3,+5],[+6,-4,-7,+3]\}$$ 
		\item Concatenating the PD-codes from $(1)$ and $(2)$ we are arrive at the PD-code below.
			$$\{[+8,-12,-9,+11],[+12,-10,-7,+9],[+10,-8,-11,+7],$$
			$$[+4,-2,-5,+1],[+2,-6,-3,+5],[+6,-4,-7,+3]\}$$
	\end{enumerate}
	It is easily checked that this is the PD-code for the composite knot in Figure \ref{fig:sum}
\end{example}

Since a knot type can be uniquely recovered from a PD-code the equivalence relation defined in Definition \ref{def:PDiagT} is equivalent to knot isotopy and we can therefore consider prime diagram trees instead of prime decomposition trees. We can thus restate Theorem \ref{thm:PGEquiv} as follows.

\begin{proposition}\label{prop:PDiagEquiv}
If $T_1$ and $T_2$ are prime diagram trees, then $k(T_1) \sim k(T_2)$ if and only if $T_1 \sim T_2$.
\end{proposition}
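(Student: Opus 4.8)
The plan is to transfer \thm{PGEquiv} across the dictionary between PD-codes and knot types that was set up in this section. First I would spell out the correspondence $T \mapsto \PG_T$ taking a prime diagram tree to a prime decomposition tree: keep the underlying depth-one rooted tree and the root label $H^p$, and relabel each leaf $v$ by the oriented prime knot type $k(v)$ that its PD-code $\PD(v)$ represents. Since a PD-code determines a knot type up to isotopy (this is the content of \cite{Ma2}), this assignment is well defined, and conversely every prime decomposition tree arises in this way by choosing, for each leaf, the PD-code of that prime knot from the prime knot table. I would then check that the correspondence carries the equivalence relation of \defn{PDiagT} to that of \defn{PDT}: an isomorphism of rooted trees matches the leaf labels of $T$ and $T'$ up to knot isotopy exactly when it matches the leaf labels of $\PG_T$ and $\PG_{T'}$ up to knot isotopy, because in both settings the leaf labels are compared only up to isotopy and the chosen vertex orderings are canonical functions of the multiset of leaf knot types.

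Second, I would identify the knot $k(T)$ built from a prime diagram tree with the spliced knot $S(\PG_T)$. By construction $k(T)$ is the iterated connected sum of the prime knots $k(v)$ over the leaves $v$, realised at the level of PD-codes by \defn{PDSum}, which reproduces the topological connected sum up to isotopy. On the other side, splicing a depth-one tree whose root is labeled $H^p$ and whose $p$ leaves are labeled by prime knots produces precisely the connected sum of those $p$ prime knots; this is the special case of Budney's splicing operation described in \sect{JSJGraph}. Hence $k(T)$ and $S(\PG_T)$ are isotopic.

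With these two facts the proposition follows formally: $k(T_1) \sim k(T_2)$ if and only if $S(\PG_{T_1}) \sim S(\PG_{T_2})$, if and only if (by \thm{PGEquiv}) $\PG_{T_1} \sim \PG_{T_2}$, if and only if $T_1 \sim T_2$.

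I expect the only real friction to be bookkeeping rather than mathematics. One must make sure that the connected-sum convention of \defn{PDSum}, iterated over several factors and combined with the reordering of leaves required by \defn{PDiagT} (so that $i \le j$ forces $k(v_i) \le k(v_j)$), genuinely produces a single well-defined PD-code whose knot type is the topological connected sum of the factors, independently of the order in which the pairwise sums are performed. This is exactly the place where commutativity and associativity of connected sum, together with the uniqueness half of the classical prime decomposition theorem, are being invoked; once those are quoted, the remainder of the argument is purely formal.
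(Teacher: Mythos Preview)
Your approach is correct and matches the paper's own treatment: the paper does not give a formal proof of this proposition at all, but simply remarks that since a knot type is uniquely recovered from a PD-code, the equivalence of \defn{PDiagT} coincides with knot isotopy, so prime diagram trees may be used in place of prime decomposition trees and the statement is just a restatement of \thm{PGEquiv}. Your proposal spells out exactly this transfer in detail (the map $T\mapsto \PG_T$, the identification $k(T)\sim S(\PG_T)$, and the matching of equivalence relations), so it is the same argument made explicit.
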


%

\begin{figure}
\begin{center}
\begin{overpic}[height=5cm]{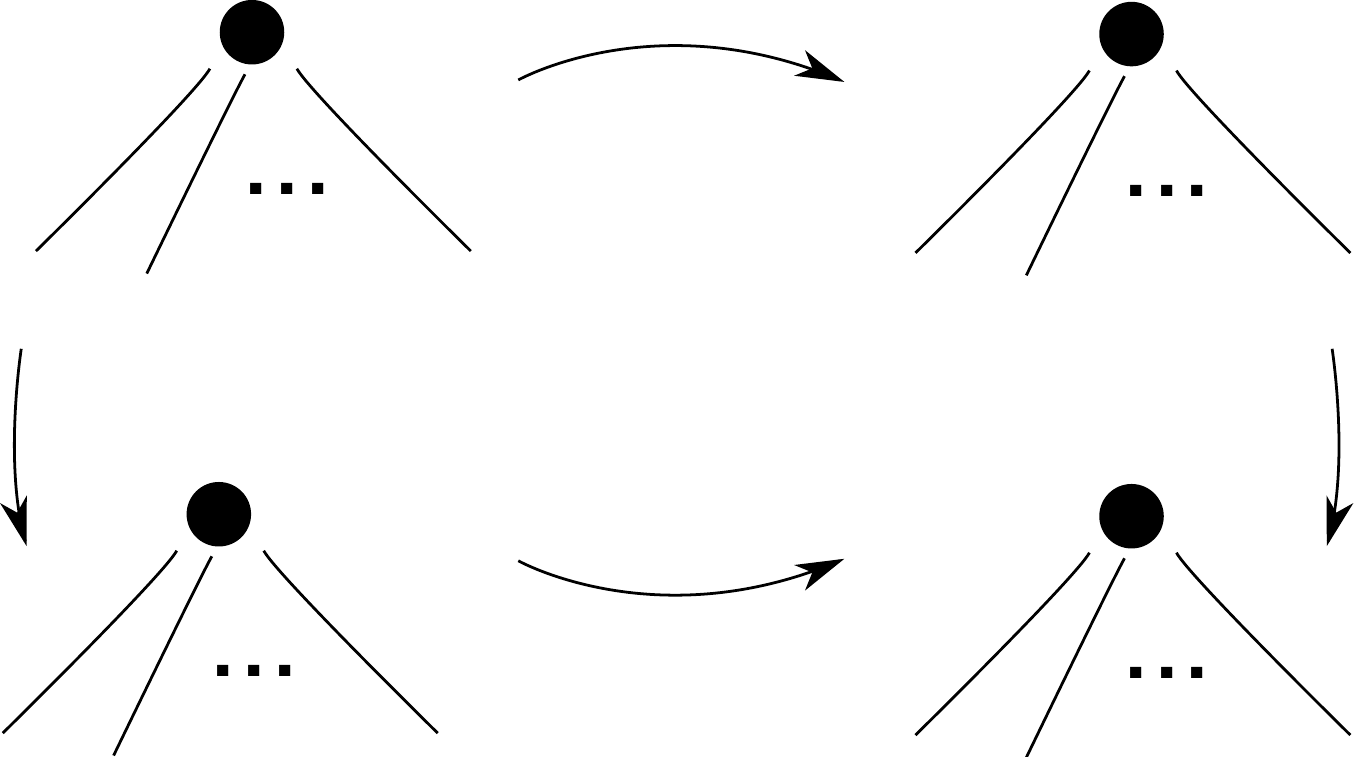}
\put(-3,22){$\phi$}
\put(100,22){$\bar{\phi}$}
\put(49,54){$k$}
\put(49,7){$k$}

\put(-1,33){$D_1$}
\put(8,31){$D_2$}
\put(33,33){$D_n$}

\put(-3,-3){$D_1'$}
\put(6,-5){$D_2'$}
\put(31,-3){$D_n'$}

\put(56,-3){$k(D_1)'$}
\put(71,-5){$k(D_2)'$}
\put(95,-3){$k(D_n)'$}

\put(59,33){$k(D_1)$}
\put(73,31){$k(D_2)$}
\put(95,33){$k(D_n)$}

\end{overpic}
\end{center}
\caption{\label{fig:TreeComDiag} Trees of diagrams mapping to trees of knots.}
\end{figure}

\section{Symmetries of Prime Knots}\label{sect:sym}

The \emph{intrinsic symmetry group} of a link was first defined by Whitten \cite{Whit} and discussed in detail by the author in Cantarella, et al. \cite{VIGRE}. These symmetries are the generalization of invertiblility and chirality and are described by the group given in the following definition.  We give the specialized definition corresponding to a knot.


\begin{definition}
	The group of possible intrinsic symmetries of a knot is given by the group $\Gamma = \Z_2 \times \Z_2$. We will  describe $\Z_2$ as the multiplicative group $\{1,-1\}$ and we will write an element $\gamma \in \Gamma$ as $(\epsilon_0, \epsilon_1)$. If $\epsilon_0 = -1$, then the element corresponds to mirroring the knot. Similarly, if $\epsilon_1=-1$ then the element corresponds to changing the knots orientation. 
\end{definition}

The action of the intrinsic symmetry group on PD-codes (for any link) is described in detail by the author in \cite{Ma2}. Lemma 27 of \cite{Ma2} ensures the existence of a preferred list of PD-codes, one for each knot type, which we will refer to from now on by \textbf{the prime knot table}. The PD-codes from the preferred list have the property that only the trivial element of $\Gamma$ acts trivially on the PD-code. We will use the prime knot table to define a combinatorial object analogous to the prime decomposition tree that will play a key role in the computation of symmetries and the tabulation of composite knots.

We now introduce our conventions for indexing the prime factors of a knot and define the action of $\Gamma$ on a prime diagram tree.

\begin{definition}\label{def:T}
Let $\mathbb{T}$ be the collection of prime diagram trees whose vertices are labeled by diagrams from the prime knot table.
\end{definition}

\begin{definition}
Given a prime decomposition tree $\PG(K)$ and $\gamma \in \Gamma$ we define $\gamma(\PG(K))$ to be the tree whose ordered leaves are the knots $\gamma(K_i)$ where the $K_i$ are the labels of $\PG(K)$.
\end{definition}


\begin{definition}\label{def:FactorList}
A \textbf{base prime factor list} $P=\{(D_i,n_i)\}_{i=1}^l$ is a set of PD-codes from the prime knot table along with multiplicities. We say that $P$ is the base prime factor list for a knot $k$ if the base types of the prime factors of $k$ appear in $P$ with the correct multiplicities.
\end{definition}

\begin{definition}\label{def:TP}
The collection of all prime diagram trees whose leaves are labeled by knots whose base types are exactly the base prime factor list $P$ will be denoted $\T(P)$.
\end{definition}

\begin{definition}\label{def:X}
Let $X(P)$ be the set $\{\cross_{i=1}^l \Gamma^{n_i}\}$.
\end{definition}

The set $X(P)$ describes the possible choices for taking the connected sum of $n$ prime knots.

\begin{lemma}\label{lemma:XisTP}
$X(P)$ is in bijection with $\T(P)$.
\end{lemma}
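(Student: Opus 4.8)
The plan is to write the bijection down by hand: once \defn{TP} and \defn{X} are unwound the map is essentially forced, and the only non-formal input is the freeness of the $\Gamma$-action on the prime knot table recalled before \defn{T} (Lemma~27 of \cite{Ma2}): the only element of $\Gamma$ fixing a table PD-code is the identity.

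First I would extract the exact form of that statement I need. For a PD-code $D_i$ from the prime knot table the four PD-codes $\gamma(D_i)$, $\gamma\in\Gamma$, are pairwise distinct, so $\gamma\mapsto\gamma(D_i)$ is injective on $\Gamma$. By the conventions defining prime diagram trees, a leaf carrying base type $D_i$ is labeled by a PD-code of the form $\gamma(D_i)$; by the injectivity just noted, this $\gamma\in\Gamma$ is then uniquely determined by the leaf label. (The four diagrams $\gamma(D_i)$ can represent as few as two knot types, when $D_i$ is reversible or amphichiral; that is precisely why $\T(P)$ is larger than the set of isotopy classes with factor list $P$, and why the statement is phrased at the level of diagram trees.)

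Next I would read off the shape of a general $T\in\T(P)$ and build the two maps. By \defn{PDiagT} the leaves of $T$ are ordered with non-decreasing base type, and by \defn{TP} their base types form the multiset in which $D_i$ occurs with multiplicity $n_i$; so the ordered leaves split into consecutive blocks $B_1,\dots,B_l$ with $B_i$ consisting of $n_i$ leaves all of base type $D_i$. Letting $\gamma_{i,j}\in\Gamma$ be the unique element with the $j$-th leaf of $B_i$ labeled $\gamma_{i,j}(D_i)$, set $\Phi(T)=\bigl((\gamma_{i,j})_{j=1}^{n_i}\bigr)_{i=1}^{l}\in\Gamma^{n_1}\times\cdots\times\Gamma^{n_l}=X(P)$. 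In the other direction, send $\bigl((\gamma_{i,j})_j\bigr)_i\in X(P)$ to the prime diagram tree $\Psi\bigl((\gamma_{i,j})\bigr)$ with root $H^p$, $p=\sum_i n_i$, whose ordered leaves are $\gamma_{1,1}(D_1),\dots,\gamma_{1,n_1}(D_1),\gamma_{2,1}(D_2),\dots,\gamma_{l,n_l}(D_l)$; this lies in $\T(P)$, since each leaf label is a $\Gamma$-translate of a table PD-code, the base types occurring are exactly those of $P$ with the correct multiplicities, and the leaves appear in the chosen base-type order.

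Finally I would check $\Phi\circ\Psi=\operatorname{id}_{X(P)}$ and $\Psi\circ\Phi=\operatorname{id}_{\T(P)}$, both immediate from the constructions --- the first because $\Phi$ reads back exactly the entries $\Psi$ inserted, the second using the injectivity of $\gamma\mapsto\gamma(D_i)$ to see there is no other way to express a leaf label. The only place any care is needed is the well-definedness of $\Phi$, i.e.\ that each leaf label of a tree in $\T(P)$ pins down a unique $\gamma_{i,j}$; that is exactly the freeness property above, and everything else is bookkeeping about ordered leaves and the block decomposition forced by the base-type ordering. I expect this to be the sole obstacle, and a mild one.
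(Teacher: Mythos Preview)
Your proposal is correct and follows the same approach as the paper: the paper's proof simply writes down the forward map $x\mapsto\T(x)$ sending $x=((x_{i,j}))$ to the tree with ordered leaves $x_{1,1}D_1,\ldots,x_{l,n_l}D_l$, without further comment. You construct the same map (your $\Psi$) and additionally supply the inverse $\Phi$ together with the observation that well-definedness of $\Phi$ rests on the freeness of the $\Gamma$-action on table PD-codes; this is exactly the detail the paper leaves implicit.
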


\begin{proof}
The correspondence is given by associating $x=((x_{1,1},\ldots,x_{1,n_1}),\ldots,(x_{l,1},\ldots,x_{l,n_l}))$ with the prime diagram tree whose children are $x_{1,1}D_1,\ldots,x_{1,n_1}D_1,\ldots,x_{l,1}D_l,\ldots,x_{l,n_l}D_l$ (cf. Figure~\ref{fig:Tree}). We will denote this tree by $\T(x)$.

\begin{figure}
\begin{center}
\begin{overpic}{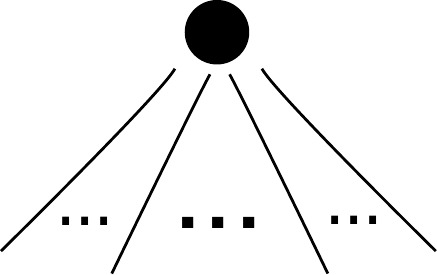}
\put(-14,-5){$x_{1,1}D_1$}
\put(16,-8){$x_{1,n_1}D_1$}
\put(66,-8){$x_{l,1}D_l$}
\put(95,-5){$x_{l,n_l}D_l$}
\end{overpic}
\end{center}
\caption{\label{fig:Tree} The prime diagram tree associated to $P=\{(D_i,n_i)\}_{i=1}^l$ and $x=((x_{1,1},\ldots,x_{1,n_1}),\ldots,(x_{l,1},\ldots,x_{l,n_l}))$.}
\end{figure}

\end{proof}

\begin{definition}\label{def:GammaP}
We define the group $\Gamma(P)$ by
$$\Gamma(P):= \oplus_{i=1}^l \left[ \left( \oplus_{n_i} \Gamma \right) \rtimes S_{n_i} \right]$$
 
and the group $\Sigma(P)$ by

$$\Sigma(P):= \oplus_{i=1}^l \left[ \left( \oplus_{n_i} \Sigma(k_i) \right) \rtimes S_{n_i} \right]$$
\end{definition}

%
%
%
%
%
%
%
%

We can let the group $\Gamma(P)$ act on $X(P)$ to change between different choices of orientations and handedness of each prime factor in the connected sum. This action is defined as follows.

\begin{definition}\label{def:GammaOnX}
There is a natural action of $\Gamma(P)$ on $X(P)$ given by the following.

$$((\gamma_{1,1},\ldots,\gamma_{1,n_1},p_1),\ldots,(\gamma_{l,1},\ldots,\gamma_{l,n_l},p_l))*((x_{1,1},\ldots,x_{1,n_1}),\ldots,(x_{l,1},\ldots,x_{l,n_l}))$$
$$=((\gamma_{1,1}x_{1,p_1(1)},\ldots,\gamma_{1,n_1}x_{1,p_1(n_1)}),\ldots,(\gamma_{l,1}x_{1,p_l(1)},\ldots,\gamma_{l,n_l}x_{l,p_l(n_l)}))$$
\end{definition}


Therefore,  we have an induced action of $\Gamma(P)$ on $\T(P)$.  

\section[An Enhanced Prime Decomposition Theorem]{An Enhanced Prime Decomposition Theorem}\label{sect:decomp}

We will now lead up to the main theorem of the paper. Theorem \ref{theorem:SigPOnX} is an enhanced version of the classical prime decomposition theorem for knots in the sense that it gives an explicit way of constructing all composite knots whose \emph{base} prime factor list is given. 

\begin{proposition}\label{prop:SigmaOrbs}
Let $x$ and $x'$ be elements of $X(P)$ for some prime factor list $P$. Then $\T(x)$ and $\T(x')$ are equivalent as prime diagram trees if and only if there exists $\sigma \in \Sigma(P)$ such that $\sigma(x)=x'$ under the action of $\Sigma(P)$ on $X(P)$.
\end{proposition}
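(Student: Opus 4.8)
The plan is to unwind both sides of the biconditional through the bijection $X(P) \leftrightarrow \T(P)$ of \lem{XisTP} and reduce everything to the definition of tree equivalence in \defn{PDiagT}, which says $\T(x) \sim \T(x')$ iff there is a tree isomorphism $\phi$ matching up the vertex labels \emph{up to isotopy} (equivalently, up to the $\Gamma$-action, since by Lemma~27 of \cite{Ma2} each base type appears once in the prime knot table and only the trivial element of $\Gamma$ fixes such a PD-code). Under the correspondence $\T(x)$, the children are the diagrams $x_{i,j}D_i$; the content of the proposition is that a tree isomorphism is forced to send $D_i$-labelled leaves to $D_i$-labelled leaves (because distinct $D_i$ are distinct knot types) and that on each block of $n_i$ leaves it can be any permutation, while the requisite ``relabel up to isotopy'' data on each leaf is exactly the freedom to act by an element of $\Sigma(k_i) \le \Gamma$, since $\Sigma(k_i)$ is by definition the stabilizer recording which symmetries $\gamma$ satisfy $\gamma(D_i) \sim D_i$.

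I would structure the argument in two directions. \textbf{($\Leftarrow$)} Suppose $\sigma = ((\gamma_{1,1},\ldots,\gamma_{1,n_1},p_1),\ldots) \in \Sigma(P)$ with $\sigma(x) = x'$. The permutation $p_i \in S_{n_i}$ on the $i$th block of leaves assembles into a rooted-tree isomorphism $\phi : \T(x) \to \T(x')$ (it fixes the root, which carries the same label $H^p$ on both sides since $P$ and hence $p = \sum n_i$ agree). For a leaf $v$ in block $i$ at position $j$, $\phi$ sends the $D_i$-leaf $x_{i,p_i(j)}D_i$ to the $D_i$-leaf $x'_{i,j}D_i = \gamma_{i,j} x_{i,p_i(j)} D_i$; since $\gamma_{i,j} \in \Sigma(k_i)$ we have $\gamma_{i,j}(k(x_{i,p_i(j)}D_i)) \sim k(x_{i,p_i(j)}D_i)$, so $k(\phi(v)) \sim k(v)$ and $\phi$ witnesses $\T(x) \sim \T(x')$. \textbf{($\Rightarrow$)} Conversely, given a tree isomorphism $\phi$ realizing $\T(x) \sim \T(x')$, observe that a leaf labelled by (a symmetry image of) $D_i$ can only be isotopic to a leaf labelled by (a symmetry image of) $D_j$ when $k_i = k_j$, i.e. $i = j$ since the $D_i$ are distinct base types; hence $\phi$ preserves the block decomposition and restricts to a permutation $p_i \in S_{n_i}$ on block $i$. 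For each leaf, the isotopy $k(\phi(v)) \sim k(v)$ together with the faithfulness of the $\Gamma$-action on prime-knot-table PD-codes (Lemma~27 of \cite{Ma2}) produces a unique $\gamma_{i,j} \in \Gamma$ with $\gamma_{i,j} x_{i,p_i(j)} D_i = x'_{i,j} D_i$ as PD-codes, and the isotopy forces $\gamma_{i,j} \in \Sigma(k_i)$. Packaging the $p_i$ and $\gamma_{i,j}$ gives the desired $\sigma \in \Sigma(P)$ with $\sigma(x) = x'$ by \defn{GammaOnX}.

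The main obstacle, and the only place real care is needed, is the bookkeeping in the ($\Rightarrow$) direction: making precise that ``isotopy of leaf labels'' is equivalent to ``differ by an element of $\Gamma$ acting on PD-codes, and that element lies in $\Sigma(k_i)$.'' This rests entirely on the faithfulness statement (Lemma~27 of \cite{Ma2}) — without it a tree isomorphism would only determine the $\gamma_{i,j}$ up to the pointwise stabilizer of the PD-code, and the map from tree isomorphisms to $\Sigma(P)$ would not be well defined. A secondary, purely notational nuisance is that \defn{GammaOnX} writes the permutation as acting on indices via $x_{i,p_i(k)}$, so one must be consistent about whether $\phi$ corresponds to $p_i$ or $p_i^{-1}$; I would simply fix the convention that $\phi$ maps the leaf in position $p_i(j)$ to the leaf in position $j$, matching the displayed formula, and carry it through. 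Everything else — that the root is fixed, that $X(P)$ and $\T(P)$ are identified, that $\Sigma(P) \le \Gamma(P)$ acts by restriction of \defn{GammaOnX} — is immediate from the definitions already in place.
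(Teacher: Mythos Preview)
Your proposal is correct and follows essentially the same approach as the paper: in both directions you extract block-preserving permutations $p_i$ from the tree isomorphism and leaf-wise elements $\gamma_{i,j}\in\Sigma(k_i)$ from the isotopies, exactly as the paper does. The one point worth flagging is your claim that the $(\Rightarrow)$ direction ``rests entirely on the faithfulness statement (Lemma~27 of \cite{Ma2})'': the paper bypasses this by simply writing down $\gamma_{i,j} = x'_{i,j}\, x_{i,p_i(j)}^{-1}$ in $\Gamma$ and checking directly that it lies in $\Sigma(k_i)$, so faithfulness is not actually needed---you only need existence of some $\sigma$, not a well-defined map from tree isomorphisms to $\Sigma(P)$.
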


\begin{proof}
First suppose that $\T(x)$ and $\T(x')$ are equivalent as prime diagram trees. Then, there is an isomorphism of trees $\phi: \T(x) \rightarrow \T(x')$ with corresponding isotopies between $k(v)$ and $k(\phi(v))$. Let $x=((x_{1,1},\ldots,x_{1,n_1}),\ldots,(x_{l,1},\ldots,x_{l,n_l}))$ and $x'=((x_{1,1}',\ldots,x_{1,n_1}'),\ldots,(x_{l,1}',\ldots,x_{l,n_l}'))$ and not that $\phi$ can only permute elements within each tuple since only knots of the same base type may be interchanged. Moreover, if $\phi(v) = v'$, then $k(v)$ and $k(v')$ must be isotopic knots and are therefore related by an element in $\Sigma(k(v)) = \Sigma(k(v'))$ (recall that knots of the same base type have the same intrinsic symmetry group). Let $p_i$ be the permutations induced by $\phi$ on each collection of leaves with a common base type and consider the element $\sigma \in \Sigma(P)$ that is defined as follows.

$$\sigma = ((x_{1,1}' * x_{1,p_1(1)}^{-1},\ldots,x_{1,n_1}' * x_{1,p_1(n_1)}^{-1},p_1),\ldots,(x_{l,1}' * x_{l,p_l(1)}^{-1},\ldots,x_{l,n_1}' * x_{l,p_l(n_1)}^{-1},p_l))$$

We claim that $\sigma(x) = x'$ and this can be verified by the following computation. \\

\begin{tabular}{lcl}
$\sigma(x)$ &  $=$ & $((x_{1,1}' * x_{1,p_1(1)}^{-1},\ldots,x_{1,n_1}' * x_{1,p_1(n_1)}^{-1},p_1),\ldots,(x_{l,1}' * x_{l,p_l(1)}^{-1},\ldots,x_{l,n_1}' * x_{l,p_l(n_1)}^{-1},p_l)) $ \\ \\
 & & $ *~~ ((x_{1,1},\ldots,x_{1,n_1}),\ldots,(x_{l,1},\ldots,x_{l,n_l})) $\\ \\

 & $ = $ & $ ((x_{1,1}' * x_{1,p_1(1)}^{-1} * x_{1,p_1(1)},\ldots,x_{1,n_1}' * x_{1,p_1(n_1)}^{-1} * x_{1,p_1(n_1)}),\ldots,$\\ \\
 & & $(x_{l,1}' * x_{l,p_l(1)}^{-1} * x_{l,p_l(1)},\ldots,x_{l,n_1}' * x_{l,p_l(n_1)}^{-1} * x_{l,p_l(n_l)}))$ \\ \\

 & $ = $ & $((x_{1,1}',\ldots,x_{1,n_1}'),\ldots,(x_{l,1}',\ldots,x_{l,n_l}')) $ \\ \\
 & $=$ & $x'$. \\
\end{tabular}

It remains to show that $\sigma$ is in fact an element of $\Sigma(P)$. Since $p_i$ can only permute leaves that correspond to isotopic knots we know that $x_{j,k}$ and $x_{j,p_i(k)}$ must be elements in the intrinsic symmetry of the base type corresponding to those leaves. It follows that $x_{j,k} * x_{j,p_i(k)}^{-1}$ is also an element in the intrinsic symmetry group and thus $\sigma \in \Sigma(P)$.

Now conversely suppose that there is some $\sigma \in \Sigma(P)$ such that $\sigma(x)=x'$. If $$\sigma = ((\gamma_{1,1},\ldots,\gamma_{1,n_1},p_1),\ldots,(\gamma_{l,1},\ldots,\gamma_{l,n_l},p_l))$$ then permuting the leaves of $\T(x)$ by the permutation $p_i$ only permutes leaves of the same base type and since each $\gamma_{i,j} \in \Sigma(k_i)$ we have that permuted vertices are equivalent PD-codes (or we could say isotopic knots). Thus, $\sigma$ induces an isomorphism of decomposition trees and so $\T(x)\sim \T(x')$.

\begin{figure}
\begin{center}
\begin{overpic}{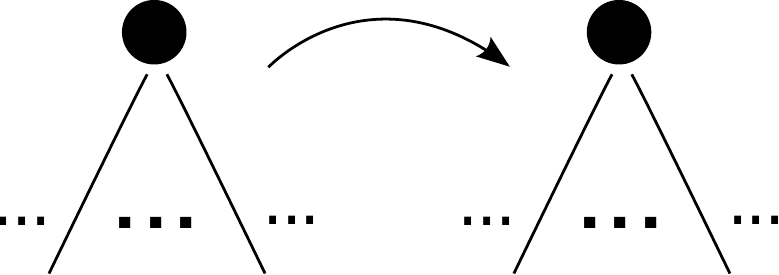}
\put(48,36){$\phi$}
\put(2,-5){$\gamma_{s}D$}
\put(31,-5){$\gamma_{t}D$}
\put(61,-5){$\gamma_{s}'D$}
\put(91,-5){$\gamma_{t}'D$}
\end{overpic}
\end{center}
\caption{\label{fig:TreePermute} A map of prime diagram trees.}
\end{figure}
\end{proof}


\begin{lemma}\label{lemma:kSurjects}
The map $k$ of Definition \ref{def:PDiagT} descends to a surjective map from the $\Sigma(P)$ orbits of $\mathbb{T}(P)$ to knot types whose base prime factor list is $P$.
\end{lemma}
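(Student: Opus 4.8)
The plan is to show two things: first, that the map $k$ is well-defined on $\Sigma(P)$-orbits of $\T(P)$, and second, that it is surjective onto knot types with base prime factor list $P$. The well-definedness is essentially immediate from \prop{SigmaOrbs}: if $x$ and $x'$ lie in the same $\Sigma(P)$-orbit, then $\T(x) \sim \T(x')$ as prime diagram trees, and since equivalence of prime diagram trees is defined precisely so that equivalent trees have isotopic corresponding knots (see \defn{PDiagT} together with \prop{PDiagEquiv}), we get $k(\T(x)) \sim k(\T(x'))$. Thus $k$ factors through the quotient by the $\Sigma(P)$-action, giving a genuine map on orbits.

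For surjectivity, I would start with an arbitrary knot type $k$ whose base prime factor list is $P = \{(D_i, n_i)\}_{i=1}^l$. By the classical prime decomposition theorem, $k$ is the connected sum of prime knots $k_{1,1}, \ldots, k_{1,n_1}, \ldots, k_{l,1}, \ldots, k_{l,n_l}$, where each $k_{i,j}$ has base type $D_i$ (this is exactly what it means for $P$ to be the base prime factor list of $k$). Each such prime factor $k_{i,j}$ differs from the table diagram $D_i$ by some intrinsic symmetry element; that is, there is $x_{i,j} \in \Gamma$ with $k_{i,j} \sim x_{i,j} D_i$. Here I would invoke the property, recalled after \defn{PD} and guaranteed by Lemma 27 of \cite{Ma2}, that for a table PD-code only the trivial element of $\Gamma$ acts trivially, so this $x_{i,j}$ is well-defined once we fix the isotopy class. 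Collecting these gives an element $x = ((x_{1,1}, \ldots, x_{1,n_1}), \ldots, (x_{l,1}, \ldots, x_{l,n_l})) \in X(P)$, which under the bijection of \lem{XisTP} corresponds to the prime diagram tree $\T(x) \in \T(P)$. Then $k(\T(x))$ is the connected sum of the $x_{i,j} D_i$, which is isotopic to $k$ by construction — using here that connected sum is well-defined on isotopy classes and compatible with the combinatorial connected sum of PD-codes from \defn{PDSum}. Passing to the $\Sigma(P)$-orbit of $x$, we conclude $k$ is in the image.

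The one point requiring a little care — and the step I expect to be the main obstacle — is making sure the prime diagram tree produced actually lies in $\T(P)$ as opposed to some reordered version: \defn{PDiagT} requires the children to respect a chosen ordering on base types, so after forming the $x_{i,j}$ one must group them by base type and order the blocks to match the fixed ordering of the $D_i$. Since the prime factors of $k$ come with base types exactly matching $P$ with the right multiplicities, this grouping is possible, and reordering within a block of common base type only moves $x$ within its $\Sigma(P)$-orbit (the symmetric group factors $S_{n_i}$ in $\Sigma(P)$ act by exactly such permutations), so the resulting orbit is unambiguous. Once this bookkeeping is settled, the argument is a direct chain of the already-established facts: \prop{SigmaOrbs} for well-definedness, \lem{XisTP} and \defn{PDSum} for the translation between trees, PD-codes, and knots, and the classical prime decomposition theorem for the existence of the prime factorization of $k$.
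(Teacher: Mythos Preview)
Your proposal is correct and follows essentially the same approach as the paper: well-definedness via \prop{SigmaOrbs} and \prop{PDiagEquiv}, surjectivity via the existence of prime factorizations together with the fact that every prime factor of the given base type is $\gamma D_i$ for some $\gamma\in\Gamma$. The paper's proof is much terser---it dispatches surjectivity in a single sentence by invoking prime factorization and transitivity of the $\Gamma$-action on each base type---while you spell out the construction of $x\in X(P)$ and the ordering bookkeeping explicitly, but the underlying argument is the same.
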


\begin{proof}
We first show that $k$ descends to a map on the $\Sigma(P)$ orbits of $\mathbb{T}(P)$. Suppose $T$ and $T'$ are in the same orbit, then there exists a $\sigma \in \Sigma(P)$ such that $\sigma T = T'$. Thus, by Proposition~\ref{prop:SigmaOrbs} we have that $T \sim T'$. So by Proposition~\ref{prop:PDiagEquiv}, $k(T) \sim k(T')$ and we see that $k$ descends to a map on $\Sigma(P)$ orbits.

Since every knot has a prime factorization and the action of $\Gamma$ on each base type is transitive, we see that $k$ is surjective.
\end{proof}

\begin{theorem}\label{theorem:SigPOnX}(Enchanced Prime Decomposition Theorem)
The orbits of the $\Sigma(P)$ action on $X(P)$ are in bijection with the isotopy classes of knots whose base prime factor list is $P$.
\end{theorem}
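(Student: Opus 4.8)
The plan is to assemble \thm{SigPOnX} directly from the pieces already built, treating it as essentially a corollary of \prop{SigmaOrbs} and \lem{kSurjects} once the wrong group is replaced by the right one. The first thing I would do is pin down the relationship between $\Sigma(P)$ and $\Gamma(P)$: by \defn{GammaP}, $\Sigma(P)$ is the subgroup of $\Gamma(P)$ in which the $i$-th block of ``sign'' data is constrained to lie in $\bigoplus_{n_i}\Sigma(k_i)$ rather than in all of $\bigoplus_{n_i}\Gamma$, while the permutation factors $S_{n_i}$ are the same. So $\Sigma(P)$ acts on $X(P)$ by the restriction of the action in \defn{GammaOnX}, and the composition $X(P)\to\T(P)\to\{\text{knot types}\}$ is the map $k\circ\T$, with $\T$ the bijection of \lem{XisTP}.

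With that in place, the core of the argument is a two-way implication identifying the fibers of $k\circ\T$ with the $\Sigma(P)$-orbits. First I would transport \prop{SigmaOrbs} across the bijection $\T$ of \lem{XisTP}: since $\T$ is equivariant by construction (the action of \defn{GammaOnX} on $X(P)$ was defined precisely so that it matches the induced action on $\T(P)$), \prop{SigmaOrbs} says exactly that $\T(x)\sim\T(x')$ as prime diagram trees iff $x,x'$ lie in the same $\Sigma(P)$-orbit. Then \prop{PDiagEquiv} upgrades tree-equivalence to knot isotopy: $k(\T(x))\sim k(\T(x'))$ iff $\T(x)\sim\T(x')$. Chaining these, $x$ and $x'$ map to the same knot type iff they are in the same $\Sigma(P)$-orbit, which is injectivity of the descended map on orbits. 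Surjectivity onto knot types with base prime factor list $P$ is \lem{kSurjects} together with the observation that every such knot type is realized by some prime diagram tree in $\T(P)$ (each prime factor, up to mirror/reverse, is some $\gamma D_i$, so the knot is $k(\T(x))$ for an appropriate $x\in X(P)$); the transitivity of the $\Gamma$-action on each base type, already invoked in \lem{kSurjects}, is what guarantees such an $x$ exists.

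Concretely the proof would read: define the map $\kappa\co X(P)/\Sigma(P)\to\{\text{knot types with base list }P\}$ by $\kappa([x])=k(\T(x))$; check it is well-defined (same orbit $\Rightarrow$ same knot, via \prop{SigmaOrbs} and \prop{PDiagEquiv}, i.e. \lem{kSurjects} applied through $\T$); check it is injective (same knot $\Rightarrow$ trees equivalent by \prop{PDiagEquiv}, $\Rightarrow$ same orbit by \prop{SigmaOrbs}); check it is surjective (\lem{kSurjects} plus realizability). Each of these is a one-line citation, so the writeup is short.

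The one place that genuinely needs care — and which I expect to be the main obstacle — is the compatibility of all the identifications: that the bijection $\T$ of \lem{XisTP} intertwines the $\Sigma(P)$-action on $X(P)$ (from \defn{GammaOnX}, restricted) with the action on $\T(P)$ used in \prop{SigmaOrbs}, and that \prop{SigmaOrbs}, which is literally stated for $\T(x),\T(x')$, transfers to a statement purely about $X(P)$. Since \prop{SigmaOrbs} is already phrased in terms of the images $\T(x)$, this is really just bookkeeping: one must verify that ``$\sigma$ acts on $x$ then apply $\T$'' equals ``apply $\T$ then let $\sigma$ permute and re-decorate the leaves,'' which is immediate from \defn{GammaOnX} and the formula for $\T(x)$ in \lem{XisTP}. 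No new topology enters; the theorem is a repackaging of \prop{SigmaOrbs}, \prop{PDiagEquiv}, \lem{XisTP}, and \lem{kSurjects} into the language of group actions.
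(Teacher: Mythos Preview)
Your proposal is correct and follows essentially the same route as the paper: identify the $\Sigma(P)$-orbits on $X(P)$ with those on $\T(P)$ via the bijection of \lem{XisTP}, invoke \prop{SigmaOrbs} to match orbits with tree-equivalence classes, and then \prop{PDiagEquiv} to match those with isotopy classes. You are more explicit than the paper about surjectivity (citing \lem{kSurjects}) and about the equivariance bookkeeping for $\T$, but the underlying argument is the same.
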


\begin{proof}
First note that the $\Sigma(P)$ orbits of $X(P)$ are the same as the $\Sigma(P)$ orbits of $\mathbb{T}(P)$ by construction. Proposition~\ref{prop:SigmaOrbs} shows that the collection of $\Sigma(P)$ orbits on $\mathbb{T}(P)$ is the partition associated to the equivalence on prime decomposition trees. Thus, the result follows from Proposition~\ref{prop:PDiagEquiv}.

\end{proof}

\begin{definition}
We can now define $\orb(K)$ to be the orbit in $X(P)$ which corresponds to the knot $K$ under the map $k$.
\end{definition}

\begin{example}\label{example:31_31Orbs}
Consider the base prime factor list $P=\{(3_1,2)\}$ where $3_1$ denotes the PD-code of the standard diagram of the trefoil of Figure~\ref{fig:PD}. This corresponds to taking the connected sum of $2$ trefoils. The trefoil is an invertible, chiral knot so $\Sigma(3_1)=\{(1,1),(1,-1)\}$. Therefore, $$\Sigma(\{(3_1,2)\})=(\Sigma(3_1) \oplus \Sigma(3_1)) \rtimes S_2 = (\{(1,1),(1,-1)\} \oplus \{(1,1),(1,-1)\}) \rtimes S_2,$$ and $$X(\{(3_1,2)\})=\Gamma \cross \Gamma.$$ The orbits of the action of $\Sigma(\{(3_1,2)\})$ on $X(\{(3_1,2)\})$ are shown in Table~\ref{figure:TrefoilClasses}. There are $3$ isotopy classes of composite knots that can be constructed from $2$ trefoils. They are the right and left handed granny knots and the square knot.

\begin{figure}
\begin{center}
\includegraphics[height=3cm]{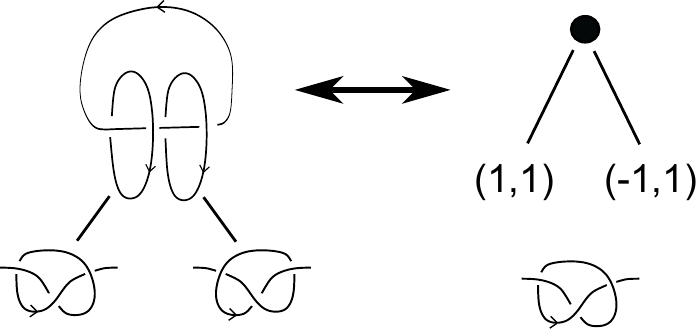}
\caption{\label{figure:SquareTree} A prime decomposition tree and its associated tree labeled by $\Gamma$.}
\end{center}
\end{figure}

\begin{table}
\begin{center}
\begin{tabular}{cc}
\toprule
Composite & Orbit\\
\toprule
\midrule
 Granny Knot & $\begin{array}{cc} ((1,1),(1,1))&((1,-1),(1,1))\\((1,1),(1,-1))&((1,-1),(1,-1)) \end{array}$\\
\midrule
Square Knot & $\begin{array}{cc} ((1,1),(-1,1))&((1,-1),(-1,1))\\((1,1),(-1,-1))&((1,-1),(-1,-1)) \\ ((-1,1),(1,1))&((-1,-1),(1,1))\\ ((-1,1),(1,-1)) & ((-1,-1),(1,-1)) \end{array}$\\
\midrule
Granny Knot & $\begin{array}{cc} ((-1,1),(-1,1))&((-1,-1),(-1,1))\\((-1,1),(-1,-1))&((-1,-1),(-1,-1)) \end{array}$\\	
\bottomrule
\end{tabular}
\end{center}
\caption{\label{figure:TrefoilClasses} The orbits of the action of $\Gamma$ corresponding to the isotopy classes of $3_1 \# 3_1$ of Example~\ref{example:31_31Orbs}.}
\end{table}

\end{example}

\section[Symmetries of Composite Knots]{Symmetries of Composite Knots}\label{sect:CompKnotSyms}

We now turn to the task of computing the intrinsic symmetries of a composite knot from those of its prime factors. Table~\ref{figure:SymNums} gives the occurrences of each symmetry type among the $544$ composite knots through $12$ crossings. The results in this section justify the computation which is explained in section~\ref{sect:CompTab}.

\begin{definition}\label{def:Delta}
We define $\Delta$ to be the following subgroup of $\Gamma(P)$.
$$\Delta:=\{((\gamma,\ldots,\gamma,p_1),\ldots,(\gamma,\ldots,\gamma,p_l)) | \gamma \in \Gamma, p_i \in S_{n_i}\}$$

\end{definition}

The following lemma is immediate.

\begin{lemma}\label{lemma:DeltatoGamma}
The map $\pi:\Delta \rightarrow \Gamma$ given by 
$$((\gamma,\ldots,\gamma,p_1),\ldots,(\gamma,\ldots,\gamma,p_l)) \mapsto \gamma$$
is a surjection.
\end{lemma}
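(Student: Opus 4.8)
The plan is to verify the two assertions behind the word ``immediate'': that $\pi$ is a group homomorphism (in particular, that $\Delta$ as written is genuinely closed under the operation of $\Gamma(P)$, hence a subgroup) and that $\pi$ is onto. The proof will be short; the one point deserving a sentence is that the symmetric-group factors in $\Gamma(P)$ do not disturb the $\Gamma$-coordinate of a ``diagonal'' element.

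First I would record the relevant feature of the group law on $\Gamma(P)=\oplus_{i=1}^{l}\left[\left(\oplus_{n_i}\Gamma\right)\rtimes S_{n_i}\right]$ that is compatible with the action in Definition~\ref{def:GammaOnX}: multiplying two elements combines the $\Gamma^{n_i}$-tuples entrywise after re-indexing one of them by a permutation, and combines the $S_{n_i}$-parts by composition. The key elementary fact is that a \emph{constant} tuple $(\gamma,\ldots,\gamma)$ is fixed by every permutation, so for $a=((\gamma,\ldots,\gamma,p_1),\ldots,(\gamma,\ldots,\gamma,p_l))$ and $b=((\gamma',\ldots,\gamma',p_1'),\ldots,(\gamma',\ldots,\gamma',p_l'))$ in $\Delta$, the product $ab$ again has constant tuples in each block, with common entry $\gamma\gamma'$ (using that $\Gamma$ is abelian) and permutation parts $p_ip_i'$. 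Hence $ab\in\Delta$, confirming $\Delta$ is a subgroup, and $\pi(ab)=\gamma\gamma'=\pi(a)\pi(b)$. The identity of $\Gamma(P)$ is the diagonal element with $\gamma=(1,1)$ and trivial permutations, which maps to the identity of $\Gamma$, and inverses are handled identically (a constant tuple, possibly re-indexed, is still constant), so $\pi$ is a homomorphism.

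For surjectivity, given $\gamma\in\Gamma$ I would simply point to the element $((\gamma,\ldots,\gamma,e),\ldots,(\gamma,\ldots,\gamma,e))\in\Delta$, with $e$ the identity permutation in each $S_{n_i}$; by definition $\pi$ sends it to $\gamma$. I do not expect any genuine obstacle here: all the content is bookkeeping with the semidirect-product convention of Definition~\ref{def:GammaP}, and the lemma will be invoked later only to push a single ambient symmetry $\gamma$ down onto all prime factors at once, so only this coarse structure of $\pi$ is needed.
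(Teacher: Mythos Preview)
Your proposal is correct and matches the paper's approach: the paper simply declares the lemma ``immediate'' and gives no proof at all, so what you have written is exactly the routine verification one would supply if pressed. The only remark is that the statement of the lemma asserts surjectivity (not that $\pi$ is a homomorphism), so your second paragraph alone already covers what is claimed; the homomorphism check is a welcome bonus but not strictly required here.
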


By restricting the action of $\Gamma(P)$ to the subgroup $\Delta$ and factoring through the map of  Lemma~\ref{lemma:DeltatoGamma} we have a well-defined action of $\Gamma$ on prime diagram trees which corresponds with the action of $\Gamma$ on knots.


The following theorem gives the symmetry group of a composite knot.

\begin{theorem}\label{theorem:Syms}
$\Sigma(K) = \pi(\Delta \cap \stab{(\orb(K))})$
\end{theorem}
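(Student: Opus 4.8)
The plan is to compute $\Sigma(K)$ as a stabilizer of the $\Gamma$-action on knots (not on trees), and then trace this through the dictionary built in Section~\ref{sect:decomp}. Recall that the $\Gamma$-action on knots factors, via Lemma~\ref{lemma:DeltatoGamma}, through the restriction to $\Delta \subseteq \Gamma(P)$ of the action on prime diagram trees; so for $\gamma \in \Gamma$ one has $\gamma \cdot K \sim K$ as knots precisely when some lift $\tilde\gamma \in \Delta$ with $\pi(\tilde\gamma) = \gamma$ sends $\T(x)$ to an equivalent prime diagram tree, where $x$ is any element of $\orb(K)$. The first step is therefore to make precise the claim, asserted just before the theorem, that this $\Delta$-action on $\mathbb{T}(P)$ really does induce the honest $\Gamma$-action on knots — i.e. that $k(\tilde\gamma \cdot T) \sim \gamma \cdot k(T)$ for $\tilde\gamma \in \Delta$ with $\pi(\tilde\gamma)=\gamma$. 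This is essentially the observation that mirroring/reversing a connected sum amounts to mirroring/reversing each summand simultaneously (and the permutation coordinates $p_i$ only reorder isotopic summands), combined with Proposition~\ref{prop:SigmaOrbs}.

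Next I would prove the two inclusions of the set equality. For $\supseteq$: take $\gamma = \pi(\tilde\gamma)$ with $\tilde\gamma \in \Delta \cap \stab(\orb(K))$. Then $\tilde\gamma$ fixes the $\Sigma(P)$-orbit $\orb(K)$ inside $X(P)$; choosing $x \in \orb(K)$, this means $\tilde\gamma \cdot x$ lies in the same $\Sigma(P)$-orbit as $x$, hence by Proposition~\ref{prop:SigmaOrbs} (applied to $x$ and $x' = \tilde\gamma \cdot x$) the trees $\T(x)$ and $\T(\tilde\gamma \cdot x)$ are equivalent prime diagram trees, so $k(\T(x)) \sim k(\T(\tilde\gamma \cdot x))$. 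By Step~1 the right side is $\gamma \cdot K$, and the left is $K$; hence $\gamma \in \Sigma(K)$. For $\subseteq$: suppose $\gamma \in \Sigma(K)$, so $\gamma \cdot K \sim K$. Pick any lift $\tilde\gamma \in \Delta$ with $\pi(\tilde\gamma) = \gamma$ (possible by Lemma~\ref{lemma:DeltatoGamma}); by Step~1, $k(\tilde\gamma \cdot \T(x)) \sim \gamma \cdot K \sim K \sim k(\T(x))$. By Proposition~\ref{prop:PDiagEquiv} the trees $\tilde\gamma \cdot \T(x)$ and $\T(x)$ are equivalent, so by Proposition~\ref{prop:SigmaOrbs} there is $\sigma \in \Sigma(P)$ with $\sigma(\tilde\gamma \cdot x) = x$, i.e. $(\sigma\tilde\gamma)(x) = x$ and in particular $\sigma\tilde\gamma$ stabilizes $\orb(K)$ as a set. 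This shows $\orb(K)$ is $\tilde\gamma$-invariant modulo $\Sigma(P)$, which gives $\tilde\gamma \in \stab(\orb(K))$ since $\orb(K)$ is precisely the $\Sigma(P)$-orbit of $x$; therefore $\gamma = \pi(\tilde\gamma) \in \pi(\Delta \cap \stab(\orb(K)))$.

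The one point that needs genuine care — and which I expect to be the main obstacle — is the compatibility in the $\subseteq$ direction between "$\tilde\gamma$ moves $x$ inside its $\Sigma(P)$-orbit" and "$\tilde\gamma$ stabilizes the orbit as a subset of $X(P)$." Since $\Sigma(P)$ is normalized appropriately by $\Delta$ inside $\Gamma(P)$ (both act on $X(P)$, and $\Delta$ acts by permutation-and-sign while $\Sigma(P)$ acts by symmetry-group-and-sign), one must check that $\tilde\gamma \cdot \Sigma(P) \cdot x = \Sigma(P) \cdot \tilde\gamma \cdot x$, so that sending one representative of $\orb(K)$ into $\orb(K)$ forces the whole orbit into $\orb(K)$; equivalently, that $\tilde\gamma$ conjugates $\Sigma(P)$ into itself, which holds because conjugating a simultaneous symmetry $\gamma_i$ of $k_i$ by an element of $\Gamma$ again lands in $\Sigma(k_i)$ (the intrinsic symmetry group is normal in $\Gamma$, indeed $\Gamma$ is abelian). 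I would also double-check the base-point independence: $\orb(K)$ does not depend on which $x \in \orb(K)$ we picked, so all the above is well-posed. With these compatibilities in hand the two inclusions close up and give the stated equality $\Sigma(K) = \pi(\Delta \cap \stab(\orb(K)))$.
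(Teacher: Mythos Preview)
Your argument is correct and follows essentially the same strategy as the paper's: both inclusions trace the $\Gamma$-action on knots through the orbit/knot-type bijection of Theorem~\ref{theorem:SigPOnX} together with the compatibility $k(\tilde\gamma\cdot T)\sim\gamma\cdot k(T)$. The paper streamlines matters by working only with the canonical lift $((\gamma,\ldots,\gamma,\mathrm{id}),\ldots,(\gamma,\ldots,\gamma,\mathrm{id}))$ --- observing that pure-permutation elements of $\Delta$ already lie in $\Sigma(P)$ and hence always stabilize $\orb(K)$ --- which lets it avoid your normalization check entirely (though that check is indeed valid, since $\Gamma$ is abelian).
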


\begin{proof}
First note that if $\sigma \in \Sigma(K)$ then the element $\bar{\sigma}:=((\sigma,\ldots,\sigma,id),\ldots,(\sigma,\ldots,\sigma,id)) \in \Delta$ must stabilize $\orb(K)$ by Theorem~\ref{theorem:SigPOnX} as $K \sim \sigma(K)$. Since $\pi(\bar{\sigma})=\sigma$ we have that $\Sigma(k) \subset \pi(\Delta \cap \stab{(\orb(K))})$.

Now suppose that $\gamma \in \pi(\Delta \cap \stab{(\orb(K))})$.  We must show that $K \sim \gamma(K)$. First note that elements of the form $(((1,1),\ldots,(1,1),q_1),\ldots,((1,1),\ldots,(1,1),q_k))$ always act trivially for any choice of the permutations $q_i$ since the connected sum of knots is commutative. Thus we may assume that $((\gamma,\ldots,\gamma,id),\ldots,(\gamma,\ldots,\gamma,id)) \in \stab{(\orb(K))}$. But, $\pi(((\gamma,\ldots,\gamma,id),\ldots,(\gamma,\ldots,\gamma,id)))=\gamma$. Thus, $\gamma \in \Sigma(K)$.
\end{proof}


This provides an alternate proof of Theorem 2 of Whitten \cite{Whit}, which was obtained without using the JSJ-decomposition. Whitten’s version of the theorem gave conditions for an element of $\Gamma$ to be a symmetry of a composite knot, encoding the action of $\Gamma(P)$ on $X(P)$ in a complicated system of indices. By exposing the underlying algebra, our version is simpler to state and more amenable to computer calculation of symmetry groups.

There are several immediate corollaries to Theorem~\ref{theorem:Syms} which predict symmetries of a composite knot from the symmetries of the prime factors. We first discuss a generalization of the square knot from Example~\ref{example:31_31Orbs}.

\begin{definition}
$K$ is a \textbf{generalized square knot} if there exist $\gamma_1, \gamma_2 \in \Gamma$ so that the prime factors for $K$ are 
$$\{K_1,\gamma_1 K_1,\ldots,K_1,\gamma_1 K_1\}$$ 
where $\Sigma(K_1)=<\gamma_2>$ and $\Gamma=<\gamma_1,\gamma_2>$.
\end{definition}

\begin{corollary}\label{cor:squareknot}
If $K$ is a generalized square knot, then $\Sigma(K)=\Gamma$.
\end{corollary}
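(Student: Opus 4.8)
The plan is to apply Theorem~\ref{theorem:Syms}, which identifies $\Sigma(K)$ with $\pi(\Delta \cap \stab(\orb(K)))$, so it suffices to show that every element $\gamma \in \Gamma$ can be realized by an element of $\Delta$ that stabilizes $\orb(K)$. Since $\Gamma = \langle \gamma_1, \gamma_2\rangle$, it is enough to check this for the two generators $\gamma_1$ and $\gamma_2$, and then conclude by the fact that $\pi$ is a group homomorphism (so the preimage of a subgroup is a subgroup, and $\Delta \cap \stab(\orb(K))$ is a subgroup of $\Delta$).

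First I would handle $\gamma_2$. Because $\Sigma(K_1) = \langle \gamma_2\rangle$ and every prime factor of $K$ is either $K_1$ or $\gamma_1 K_1$ (both of the same base type, hence with the same symmetry group $\langle\gamma_2\rangle$), the diagonal element $(((\gamma_2,\ldots,\gamma_2,\mathrm{id}),\ldots)) \in \Delta$ acts on each coordinate of $X(P)$ by multiplication by $\gamma_2$; but $\gamma_2$ lies in the stabilizer of each factor (it is a symmetry of $K_1$), so this changes the prime diagram tree only by replacing each leaf label by an isotopic one. Hence this element stabilizes $\orb(K)$ and $\pi$ sends it to $\gamma_2$, so $\gamma_2 \in \Sigma(K)$.

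Next I would handle $\gamma_1$. Here the relevant observation is that the multiset of prime factors $\{K_1, \gamma_1 K_1, \ldots, K_1, \gamma_1 K_1\}$ is invariant under applying $\gamma_1$ to every factor: applying $\gamma_1$ sends each $K_1$ to $\gamma_1 K_1$ and each $\gamma_1 K_1$ to $\gamma_1^2 K_1 = K_1$ (using $\gamma_1^2 = 1$ in $\Gamma \cong \Z_2\times\Z_2$), so the pairs are merely swapped. Thus the diagonal element $\bar{\gamma_1} \in \Delta$ carries $\T(x)$ to a prime diagram tree with the same multiset of leaf labels, differing only by a permutation of leaves within a common base type; by the argument in Proposition~\ref{prop:SigmaOrbs} this is in the same $\Sigma(P)$ orbit, so $\bar{\gamma_1} \in \stab(\orb(K))$ and $\pi(\bar{\gamma_1}) = \gamma_1 \in \Sigma(K)$. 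Combining, $\Gamma = \langle\gamma_1,\gamma_2\rangle \subseteq \Sigma(K) \subseteq \Gamma$, so $\Sigma(K) = \Gamma$.

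The main obstacle is the $\gamma_1$ step: one must be careful that the diagonal action of $\bar\gamma_1$ really does just permute leaves of the same base type and does not, for instance, require a permutation mixing different base types (which would not lie in $\Gamma(P)$ as defined). This is exactly why the hypothesis builds the factor list out of a single base type $K_1$ and pairs $(K_1, \gamma_1 K_1)$ — the pairing makes the multiset $\gamma_1$-invariant and the permutation $p_i$ realizing the rearrangement stays inside $S_{n_i}$ for that one base type. I would spell out this bookkeeping with the explicit coordinates of $X(P)$ as in Definition~\ref{def:GammaOnX}, exhibiting the permutation $p$ that pairs up the $K_1$-slots with the corresponding $\gamma_1 K_1$-slots, and checking $\gamma_1 \cdot x_{p(j)} = x_j$ coordinatewise.
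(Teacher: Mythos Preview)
Your proposal is correct and is precisely the intended unpacking of the corollary: the paper states it as an immediate consequence of Theorem~\ref{theorem:Syms} without giving a separate proof, and your argument supplies exactly the details one would expect---checking that the diagonal lifts of the two generators $\gamma_1,\gamma_2$ of $\Gamma$ lie in $\Delta\cap\stab(\orb(K))$, using $\gamma_2\in\Sigma(K_1)$ for one generator and the pair-swapping invariance of the multiset $\{K_1,\gamma_1 K_1,\ldots\}$ for the other. Your care about the bookkeeping (single base type, $\gamma_1^2=e$, the permutation staying inside the single $S_{n_1}$) is exactly the point.
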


Corollary~\ref{cor:squareknot} could be thought of as an application of the following which gives sufficient conditions for a $2$ factor summand to admit a symmetry.

\begin{corollary}
If $K=K_1 \# \gamma K_1$, then $\gamma \in \Sigma(K)$.
\end{corollary}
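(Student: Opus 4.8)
The plan is to realize the corollary as a direct application of Theorem~\ref{theorem:Syms}, by exhibiting an explicit element of $\Delta \cap \stab(\orb(K))$ whose image under $\pi$ is the prescribed $\gamma$. Here the base prime factor list $P$ consists of the single base type $K_1$ with multiplicity $2$, so $\Gamma(P) = (\Gamma \oplus \Gamma) \rtimes S_2$ and $X(P) = \Gamma \times \Gamma$. Writing $K = K_1 \# \gamma K_1$ as a prime diagram tree, its two leaves carry (up to the action already quotiented out) the labels $(1,1)$ and $\gamma$; that is, $\orb(K)$ is the $\Sigma(P)$-orbit of $x = ((1,1), \gamma) \in X(P)$ (or of $(\gamma,(1,1))$ — the same orbit, since the transposition in $S_2$ lies in $\Sigma(P)$).

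First I would write down the candidate stabilizing element: take $\bar\gamma := ((\gamma, \gamma, \tau),\ldots) \in \Delta$, where $\tau \in S_2$ is the transposition and the list has only one block since $l=1$; so $\bar\gamma = ((\gamma,\gamma,\tau))$. Applying the action of Definition~\ref{def:GammaOnX}, $\bar\gamma * ((1,1),\gamma) = (\gamma \cdot \gamma, \gamma \cdot (1,1)) = (\gamma^2, \gamma) = ((1,1),\gamma)$, using that $\gamma^2 = (1,1)$ in $\Gamma = \Z_2 \times \Z_2$. Hence $\bar\gamma$ fixes the representative $x$, so $\bar\gamma \in \stab(\orb(K))$, and clearly $\bar\gamma \in \Delta$. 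Since $\pi(\bar\gamma) = \gamma$ by Lemma~\ref{lemma:DeltatoGamma}, Theorem~\ref{theorem:Syms} gives $\gamma \in \pi(\Delta \cap \stab(\orb(K))) = \Sigma(K)$, which is exactly the claim.

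I would then add a sentence reconciling conventions: the labels of the prime diagram tree for $K$ must be taken from the prime knot table, and $\gamma K_1$ may not be such a table diagram, but by \cite{Ma2} (Lemma~27) there is a table diagram $D$ for the base type with $\gamma K_1 \sim \gamma' D$ for an appropriate $\gamma' \in \Gamma$; replacing $\gamma$ by $\gamma'$ throughout does not change the computation $\gamma'^2 = (1,1)$ nor the fact that the relevant group element lies in $\Delta$, and one recovers $\gamma \in \Sigma(K)$ after translating back through the action of $\Sigma(K_1)$ on the table diagram. The one point to be careful about — and the only real obstacle — is bookkeeping: making sure the orbit $\orb(K)$ is identified with the correct $\Sigma(P)$-orbit (i.e.\ that ``$K = K_1 \# \gamma K_1$'' translates to the representative $((1,1),\gamma)$ under the bijection of Lemma~\ref{lemma:XisTP} and the map $k$ of Lemma~\ref{lemma:kSurjects}), since an off-by-one in how the connect-sum convention of Definition~\ref{def:PDSum} places the two factors could put the ``base'' factor in the second slot; but because the $S_2$-transposition is always available in $\Sigma(P)$, either choice lands in the same orbit and the argument is unaffected. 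Everything else is a one-line computation in $\Z_2 \times \Z_2$.
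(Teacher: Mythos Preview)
Your proposal is correct and is exactly the explicit verification the paper has in mind: the corollary is stated without proof as immediate from Theorem~\ref{theorem:Syms}, and your element $\bar\gamma=(\gamma,\gamma,\tau)\in\Delta$ with $\bar\gamma*((1,1),\gamma)=((1,1),\gamma)$ is precisely the one-line check. Your reconciliation paragraph is more cautious than needed --- if $K_1=\alpha D$ for the table diagram $D$, then $\orb(K)$ contains $(\alpha,\gamma\alpha)$ and the \emph{same} $\bar\gamma$ already fixes it via $(\gamma\cdot\gamma\alpha,\gamma\cdot\alpha)=(\alpha,\gamma\alpha)$, so no substitution of $\gamma$ by another $\gamma'$ is required.
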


It is also immediate that a connected sum of knots with full symmetry will also have full symmetry.

\begin{corollary}
If $K$ is a composite knot with prime factors $\{K_1,\ldots,K_n\}$ such $\Sigma(K_i)=\Gamma$ for all $i$, then $\Sigma(K)=\Gamma$.
\end{corollary}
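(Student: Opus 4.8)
The plan is to apply Theorem~\ref{theorem:Syms} directly, showing that every $\gamma \in \Gamma$ lies in $\pi(\Delta \cap \stab(\orb(K)))$. Fix a composite knot $K$ with prime factors $\{K_1,\ldots,K_n\}$, each satisfying $\Sigma(K_i) = \Gamma$. Choose an element $x \in X(P)$ representing $\orb(K)$, say $x = ((x_{1,1},\ldots,x_{1,n_1}),\ldots,(x_{l,1},\ldots,x_{l,n_l}))$, where the $n_i$ are the multiplicities in the base prime factor list $P$. The key observation is that since $\Sigma(k_i) = \Gamma$ for every base type $k_i$ appearing in $P$, the group $\Sigma(P) = \oplus_{i=1}^l [(\oplus_{n_i}\Sigma(k_i)) \rtimes S_{n_i}]$ is in fact equal to the full group $\Gamma(P)$.

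Now I would take an arbitrary $\gamma \in \Gamma$ and form the element $\bar{\gamma} := ((\gamma,\ldots,\gamma,id),\ldots,(\gamma,\ldots,\gamma,id)) \in \Delta$. Because $\Sigma(P) = \Gamma(P)$, this $\bar{\gamma}$ lies in $\Sigma(P)$, and hence $\bar{\gamma}(x)$ represents a tree $\T(\bar{\gamma}(x))$ that is $\Sigma(P)$-equivalent to $\T(x)$; by Theorem~\ref{theorem:SigPOnX} it represents the same knot $K$, i.e. $\bar{\gamma}(x)$ lies in the same orbit $\orb(K)$. Therefore $\bar{\gamma} \in \Delta \cap \stab(\orb(K))$. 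Since $\pi(\bar{\gamma}) = \gamma$, we conclude $\gamma \in \pi(\Delta \cap \stab(\orb(K)))$. As $\gamma$ was arbitrary, $\Gamma \subseteq \pi(\Delta \cap \stab(\orb(K))) = \Sigma(K)$ by Theorem~\ref{theorem:Syms}, and the reverse inclusion is automatic since $\Sigma(K) \leq \Gamma$ always. Hence $\Sigma(K) = \Gamma$.

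The only subtlety — and the one point worth spelling out carefully — is the identification $\Sigma(P) = \Gamma(P)$ under the hypothesis. This is where one must be a little careful about what "$\Sigma(K_i) = \Gamma$ for all $i$" means at the level of base types: distinct prime factors $K_i$ may share a base type, but the hypothesis guarantees each relevant base type $k_i$ has $\Sigma(k_i) = \Gamma$, so the direct-sum-over-multiplicities factors each coincide with $\oplus_{n_i}\Gamma$, and the semidirect product with $S_{n_i}$ is then literally the $i$th factor of $\Gamma(P)$. I expect this bookkeeping to be the main (indeed only) obstacle, and it is routine. An alternative, perhaps cleaner, route avoids even naming $x$: one simply notes that $\gamma(K)$ has the same base prime factor list $P$ as $K$ (applying $\gamma$ to a knot applies it to each prime factor, preserving base types), and since each factor $K_i$ satisfies $\gamma K_i \sim K_i$ (as $\gamma \in \Gamma = \Sigma(K_i)$), uniqueness of prime factorization gives $\gamma(K) \sim K$; then $\gamma \in \Sigma(K)$ by definition. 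Either way the proof is short; I would present the first version to keep it consistent with the machinery of Theorem~\ref{theorem:Syms}, or the second as a one-line remark.
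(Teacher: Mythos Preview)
Your proof is correct and follows exactly the approach the paper intends: the paper states this corollary as ``immediate'' from Theorem~\ref{theorem:Syms} without giving a separate argument, and your first version simply spells out why---under the hypothesis $\Sigma(P)=\Gamma(P)$, so any diagonal lift $\bar{\gamma}\in\Delta$ already lies in $\Sigma(P)$ and hence stabilizes every $\Sigma(P)$-orbit, in particular $\orb(K)$. Your alternative one-line argument via unique factorization is also fine and is in the same spirit.
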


We can produce a knot with full symmetry by taking the connected sum of each flavor of base type. 

\begin{corollary}
If $K=K_1 \# \gamma_1 K_1 \# \gamma_2 K_1 \# \gamma_3 K_1$ with $\gamma_1, \gamma_2, \gamma_3$ all distinct, then $\Sigma(K)=\Gamma$.
\end{corollary}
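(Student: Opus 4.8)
The plan is to deduce this from Theorem~\ref{theorem:Syms}, which characterizes $\Sigma(K)$ as $\pi(\Delta \cap \stab(\orb(K)))$, so it suffices to show that for each $\gamma \in \Gamma$ there is an element of $\Delta$ projecting to $\gamma$ under $\pi$ that stabilizes $\orb(K)$. Here the base prime factor list is $P = \{(K_1, 4)\}$ (a single base type with multiplicity $4$), so $X(P) = \Gamma^4$ and $\Delta$ consists of elements $((\gamma, \gamma, \gamma, \gamma), p)$ with $\gamma \in \Gamma$ and $p \in S_4$. The orbit $\orb(K)$ is the $\Sigma(P)$-orbit of the point $x = (1, \gamma_1, \gamma_2, \gamma_3) \in \Gamma^4$, where $1$ denotes the identity of $\Gamma$. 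Since $1, \gamma_1, \gamma_2, \gamma_3$ are four distinct elements and $\abs{\Gamma} = 4$, these are precisely \emph{all} the elements of $\Gamma$; that is, $\{1, \gamma_1, \gamma_2, \gamma_3\} = \Gamma$ as a set.

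First I would fix an arbitrary $\gamma \in \Gamma$ and consider the element $\delta_\gamma = ((\gamma, \gamma, \gamma, \gamma), p_\gamma) \in \Delta$ where $p_\gamma \in S_4$ is a permutation to be chosen. Acting on $x$ by the diagonal part replaces the coordinate tuple $(1, \gamma_1, \gamma_2, \gamma_3)$ by $(\gamma, \gamma\gamma_1, \gamma\gamma_2, \gamma\gamma_3)$. Because left multiplication by $\gamma$ is a bijection of $\Gamma$ and $\{1,\gamma_1,\gamma_2,\gamma_3\} = \Gamma$, this new tuple is a permutation of the original tuple: it again lists all four elements of $\Gamma$, just in a (possibly) different order. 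Hence there exists a permutation $p_\gamma \in S_4$ that rearranges $(\gamma, \gamma\gamma_1, \gamma\gamma_2, \gamma\gamma_3)$ back to exactly $(1, \gamma_1, \gamma_2, \gamma_3) = x$. Therefore $\delta_\gamma(x) = x$ (in fact fixing the point, not merely the orbit), so $\delta_\gamma \in \Delta \cap \stab(\orb(K))$, and $\pi(\delta_\gamma) = \gamma$. Since $\gamma$ was arbitrary, $\Gamma \subseteq \pi(\Delta \cap \stab(\orb(K))) = \Sigma(K)$ by Theorem~\ref{theorem:Syms}, and the reverse inclusion is automatic since $\Sigma(K) \le \Gamma$ always. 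Thus $\Sigma(K) = \Gamma$.

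The only point that requires a little care — and the step I expect to be the main obstacle, though it is minor — is verifying that the hypothesis "$\gamma_1, \gamma_2, \gamma_3$ all distinct" genuinely forces $\{1, \gamma_1, \gamma_2, \gamma_3\}$ to exhaust $\Gamma$. This needs the implicit assumption that none of $\gamma_1, \gamma_2, \gamma_3$ equals the identity; this is either part of the intended reading of "distinct" (i.e.\ the four factors $K_1, \gamma_1 K_1, \gamma_2 K_1, \gamma_3 K_1$ are pairwise non-isotopic, which would fail if some $\gamma_i = 1$) or should be stated explicitly. Granting that, $\{1, \gamma_1, \gamma_2, \gamma_3\}$ is a subset of $\Gamma$ of size $4 = \abs{\Gamma}$, hence equals $\Gamma$, and the bijection argument above goes through verbatim. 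One should also note that the permutations appearing (both $p_\gamma$ and the $S_4$-factor generally) act harmlessly on orbits because connected sum is commutative — exactly the observation already invoked in the proof of Theorem~\ref{theorem:Syms} — so no subtlety is hidden there. This corollary is, in effect, the extremal case of the generalized square knot phenomenon of Corollary~\ref{cor:squareknot}: loading every element of $\Gamma$ as a "decoration" of some base type $K_1$ forces the composite to have full symmetry regardless of $\Sigma(K_1)$.
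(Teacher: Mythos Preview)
Your proof is correct and is exactly the argument the paper has in mind: the corollary is stated without proof as one of the ``immediate'' consequences of Theorem~\ref{theorem:Syms}, and you have supplied precisely the details of that deduction, including the key observation that $\{1,\gamma_1,\gamma_2,\gamma_3\}=\Gamma$ so that diagonal left-multiplication by any $\gamma$ merely permutes the coordinates of $x$. Your remark about the implicit hypothesis that the $\gamma_i$ are also distinct from the identity is well taken and matches the paper's framing (``each flavor of base type'').
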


If $K$ has no symmetry, then iterated connected sums of $K$ will also have no symmetry.

\begin{corollary}
If $K$ has no symmetry. Then $K \# \cdots \# K$ has no symmetry.
\end{corollary}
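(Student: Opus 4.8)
The plan is to apply Theorem~\ref{theorem:Syms} with the base prime factor list $P=\{(D,n)\}$, where $D$ is the PD-code of $K$ from the prime knot table and $n$ is the number of summands, so that $K\#\cdots\#K$ has base prime factor list $P$. Since $K$ has no symmetry, $\Sigma(K)=\{(1,1)\}$, and hence by Definition~\ref{def:GammaP} the group $\Sigma(P)=\left(\oplus_n\{(1,1)\}\right)\rtimes S_n$ consists entirely of pure permutations; also $X(P)=\Gamma^n$ by Definition~\ref{def:X}.

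First I would identify $\orb(K\#\cdots\#K)$. Under the map $k$, the knot $K\#\cdots\#K$ corresponds to the prime diagram tree associated (via Lemma~\ref{lemma:XisTP}) to $x_0=((1,1),\dots,(1,1))\in X(P)$, since every child of that tree is then labeled by $(1,1)D=D$. By Definition~\ref{def:GammaOnX} every element of $\Sigma(P)$ acts on $X(P)$ merely by permuting the $n$ coordinates, and $x_0$ has all coordinates equal, so its $\Sigma(P)$-orbit is the singleton $\{x_0\}$; thus $\orb(K\#\cdots\#K)=\{x_0\}$.

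Next I would compute the stabilizer of $\{x_0\}$ inside $\Gamma(P)=\left(\oplus_n\Gamma\right)\rtimes S_n$. An element with $\Gamma$-components $(\gamma_1,\dots,\gamma_n)$ and permutation $p$ sends $x_0$ to $(\gamma_1,\dots,\gamma_n)$, which equals $x_0$ precisely when every $\gamma_i=(1,1)$; so $\stab(\{x_0\})$ is exactly the copy of $S_n$ in $\Gamma(P)$ with all $\Gamma$-components trivial. Intersecting this with $\Delta$ of Definition~\ref{def:Delta} forces the common $\Gamma$-entry $\gamma$ of a $\Delta$-element to be $(1,1)$, whence every element of $\Delta\cap\stab(\orb(K\#\cdots\#K))$ has the form $((1,1),\dots,(1,1),p)$ with $p\in S_n$. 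Applying $\pi$ from Lemma~\ref{lemma:DeltatoGamma} gives $\pi\!\left(\Delta\cap\stab(\orb(K\#\cdots\#K))\right)=\{(1,1)\}$, and by Theorem~\ref{theorem:Syms} this equals $\Sigma(K\#\cdots\#K)$, so the connected sum has no symmetry.

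I do not expect a genuine obstacle here: the whole content is that triviality of $\Sigma(K)$ removes all $\Gamma$-factors from $\Sigma(P)$, which simultaneously collapses the orbit of the canonical element to a point and prevents the $\Gamma(P)$-stabilizer of that point from carrying any nontrivial symmetry through $\pi$. The only place to be careful is reading off the action in Definition~\ref{def:GammaOnX} correctly so the stabilizer computation in the third step is right; a shortcut that sidesteps the orbit description entirely is to observe that any $\gamma\in\Sigma(K\#\cdots\#K)$ must, by Theorem~\ref{theorem:Syms}, be $\pi$ of the element of $\Delta$ with all $\Gamma$-entries equal to $\gamma$ and trivial permutation, and this element stabilizes $x_0$ only if $\gamma=(1,1)$.
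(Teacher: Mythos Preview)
Your proof is correct and is precisely the intended unwinding of Theorem~\ref{theorem:Syms}; the paper states this corollary without proof as ``immediate,'' and your computation of $\orb$, $\stab$, and $\pi$ for $P=\{(D,n)\}$ is exactly the verification that is being left to the reader. One small quibble: in your shortcut paragraph you say $\gamma$ must be $\pi$ of the $\Delta$-element with \emph{trivial} permutation, but Theorem~\ref{theorem:Syms} only guarantees $\gamma=\pi(\delta)$ for \emph{some} $\delta\in\Delta\cap\stab(\orb(K))$; this is harmless here since every $\delta\in\Delta$ with $\Gamma$-entry $\gamma$ sends $x_0$ to $(\gamma,\dots,\gamma)$ regardless of its permutation part, so the conclusion stands.
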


\begin{table}
\begin{center}
\begin{tabular}{cc}
\toprule
Symmetry Type & Number of Occurrences\\
\toprule
\toprule
No Symmetry & $20$\\

(+) Amphichiral Symmetry & $0$\\

Invertible Symmetry & $506$\\	

(-) Amphichiral Symmetry & $2$\\

Full Symmetry & $16$\\
\bottomrule
\end{tabular}
\end{center}
\caption{\label{figure:SymNums} The number of each symmetry type among the $544$ composite knots of up to $12$ crossings (up to the conjectural additivity of crossing number under connected sum).}
\end{table}

\begin{table}
\begin{center}
\begin{tabular}{cccccc}
\toprule
Crossing Number & No Symmetry & (+) Amp & Invertible & (-) Amph & Full Symmetry\\
\toprule
\toprule
6 &0&0 & 2& 0 & 1\\
7 & 0&0 & 2& 0& 0\\
8 & 0&0 & 8& 0& 1\\
9 & 0&0 & 18&0 &0 \\
10 &0 &0 & 42&0 &4 \\
11 &4 & 0& 120&0 &0 \\
12 &16 & 0& 314&2 &10 \\
\bottomrule
\end{tabular}
\end{center}
\caption{\label{figure:SymsByCross} The number of each symmetry type among the $544$ composite knots of up to $12$ crossings (up to the conjectural additivity of crossing number under connected sum)  by crossing number.}
\end{table}

\section[Future Directions]{Future Directions}

The obvious next step in this theory is to generalize to the case of links. Not only would a composite link table be a useful addition to the literature, but the underlying topology and combinatorics are very interesting. The first goal for developing such a theory is to understand how the prime decomposition of a link is ascertained from the JSJ-decomposition of the link complement. In the case of knots this is straight-forward: you simply look at the JSJ tori which are adjacent to the knot compliment. The prime factors are the companions to the $3$-cells which bound these tori and are opposite the knot complement. For links the situation seems to be much more complicated because there could be link adjacent tori in the JSJ-decomposition which do not correspond to prime summands. In addition, it is not clear what the analogous object to the prime decomposition tree (cf. Definition~\ref{def:PDT}) should be. These objects must contain enough combinatorial information to encode which component of each prime link is involved in the connected sum. It is also unclear what group naturally acts on these objects in a analogous way to the action of $\Sigma(P)$ on $X(P)$. However, once these objects are understood, the analogous enhanced prime decomposition theorem would be immediate using the same techniques used for knots due to the strong uniqueness properties of the JSJ-decomposition.


\bibliographystyle{alpha}
\bibliography{../bibtex/refs}

\newcommand{\etalchar}[1]{$^{#1}$}
\begin{thebibliography}{AVM{\etalchar{+}}05}

\bibitem[AVM{\etalchar{+}}05]{Ars1}
J.~Arsuaga, M.~Vazquez, P.~McGuirk, D.~W. Sumners, and J.~Roca.
\newblock Dna knots reveal a chiral organization of dna in phage capsids.
\newblock {\em Proc. Natl. Acad. Sci., 102:9165-9169}, 2005.

\bibitem[BCC{\etalchar{+}}12]{VIGRE}
Michael Berglund, Jason Cantarella, Meredeth Casey, Ellie Dannenberg, Whitney
  George, Aja Johnson, Amelia Kelley, Al~LaPointe, Matt Mastin, Jason Parsley,
  Jacob Rooney, and Rachel Whitaker.
\newblock Intrinsic symmetry groups of links with $8$ and fewer crossings.
\newblock {\em Symmetry, 4, 143-207.}, 2012.

\bibitem[BS10]{BS1}
F.~Bonahon and L.~Siebenmann.
\newblock New geometric splittings of classical knots and the classification of
  surfaces of arborescent knots.
\newblock {\em unpublished}, 2010.

\bibitem[Bud06]{Bud1}
Ryan Budney.
\newblock {J}{S}{J}-decompositions of knot and link complements in the
  $3$-sphere.
\newblock {\em L'enseignement Mathe'matique (2) 52}, 2006.

\bibitem[Cer11]{Cerf}
C.~Cerf.
\newblock Atlas of oriented knots and links.
\newblock {\em {http://at.yorku.ca/t/a/i/c/31.dir/cerf.htm}}, 2011.

\bibitem[CRL12]{TightComps}
Jason Cantarella, Eric Rawdon, and Al~LaPointe.
\newblock The shapes of tight composite knots.
\newblock {\em Journal of Physics A, Issue 22}, 2012.

\bibitem[EN85]{EN1}
D.~Eisenbud and W.~Neumann.
\newblock Three-dimensional link theory and invariants of plane curve
  singularities.
\newblock {\em Ann. Math. Stud.}, 1985.

\bibitem[JHW98]{First}
Morwen~Thistlethwaite Jim~Hoste and Jeffrey Weeks.
\newblock The first 1,701,935 knots.
\newblock {\em Math. Intelligencer (Springer) 20 (4): 33–48}, 1998.

\bibitem[Joh79]{JSJ2}
K.~Johannson.
\newblock Homotopy equivalences of $3$-manifolds with boundaries.
\newblock {\em Lecture Notes in Mathematics, 761. Springer, Berlin}, 1979.

\bibitem[JS76]{JSJ1}
W.~Jaco and P.~Shalen.
\newblock A new decomposition theorem for irreducible sufficiently-large
  $3$-manifolds.
\newblock {\em Algebraic and geometric topology (Proc. Sympos. Pure Math.,
  Stanford Univ., Stanford, Calif.,1976), Part 2, pp. 71–84, Proc. Sympos.
  Pure Math., XXXII, Amer. Math. Soc., Providence, R.I.}, 1976.

\bibitem[Kir85]{Kirk1}
T.P. Kirkman.
\newblock The enumeration, description, and construction of knots of fewer than
  ten crossings.
\newblock {\em Trans. Royal Soc. Edinburgh}, 1885.

\bibitem[KS90]{Kod1}
K.~Kodama and M.~Sakuma.
\newblock Symmetry groups of prime knots up to 10 crossings.
\newblock {\em Proceedings of the International Conference on Knot Theory and
  Related Topics, Osaka, Japan}, 1990.

\bibitem[Mas13]{Ma2}
Matt Mastin.
\newblock Links and planar diagram codes.
\newblock {\em (Submitted) \url{http://arxiv.org/abs/1309.3288}}, 2013.

\bibitem[Rol03]{Rolfsen}
Dale Rolfsen.
\newblock Knots and links.
\newblock {\em AMS Chelsea Publishing}, 2003.

\bibitem[Sch49]{Shub2}
H.~Schubert.
\newblock Die eindeutige zerlegbarkeit eines knotens in primknoten.
\newblock {\em Springer}, 1949.

\bibitem[Sie80]{Sieb1}
L.~Siebenmann.
\newblock On vanishing of the rohlin invariant and nonfinitely amphicheiral
  homology 3-spheres.
\newblock {\em Proc. Sympos., Univ. Siegen, 1979, pp. 172-222, Lecture Notes in
  Math., 788, Springer, Berlin}, 1980.

\bibitem[Sul00]{Sul1}
M.~Sullivan.
\newblock Knot factoring.
\newblock {\em Mathematics Monthly, April}, 2000.

\bibitem[Whi69]{Whit}
W.~Whitten.
\newblock Symmetries of links.
\newblock {\em Transactions of the American Mathematical Society, Vol. 135.
  (Jan., 1969), pp. 213-222.}, 1969.

\end{thebibliography}

\appendix
\section{The Composite Knot Table}

In this appendix we present the table of composite links through $12$ crossings along with their symmetry groups. More precisely, this is the table of composite knots through $12$ crossings that can be constructed by taking connected sums of prime knots through $9$ crossings. If crossing number is indeed additive under the connected sum, then these two descriptions of the table coincide and the table is complete through $12$ crossings. The data was computed using a \texttt{Mathematica} package written by the author and Jason Cantarella.  The tables presented here match and serve as a verification for the table of composite knots found in Cantarella, Rawdon, and LaPointe ~\cite{TightComps}. The dictionary between the classical symmetry names and the intrinsic symmetry group as a subgroup of $\Gamma$ is given in Table \ref{figure:SymDic}.

\begin{table}
\begin{center}
\begin{tabular}{cc}
\toprule
Symmetry Type & Subgroup of $\Gamma$\\
\toprule
\toprule
No Symmetry & $\{(1,1)\}$\\

(+) Amphichiral Symmetry & $\{(1,1),(-1,1)\}$\\

Invertible Symmetry & $\{(1,1),(1,-1)\}$\\	

(-) Amphichiral Symmetry & $\{(1,1),(-1,-1)\}$\\

Full Symmetry & $\Gamma$\\
\bottomrule
\end{tabular}
\end{center}
\caption{\label{figure:SymDic}  Each intrinsic symmetry type of a knot and the corresponding subgroup of $\Gamma$. }
\end{table}

\begin{table}[h]
 \caption{\label{CompositeKnotTable 0} Composite Knot Types, Part 1 of 6}

\begin{center}
\hfill\begin{tabular}[t]{l@{\hspace{0.1in}}r} \toprule
Knot & Symmetry\\ \midrule
$3_{1} \# 3_{1}$ & Invertible \\ 

$3_{1} \# 3_{1}^m$ & Full \\ 

$3_{1}^m \# 3_{1}^m$ & Invertible \\ \addlinespace[0.491339876555172em]\cmidrule(r{0.5em}l{0.5em}){1-2}\addlinespace[0.491339876555172em]

$3_{1} \# 4_{1}$ & Invertible \\ 

$3_{1}^m \# 4_{1}$ & Invertible \\ \addlinespace[0.491339876555172em]\cmidrule(r{0.5em}l{0.5em}){1-2}\addlinespace[0.491339876555172em]

$3_{1} \# 5_{1}$ & Invertible \\ 

$3_{1} \# 5_{1}^m$ & Invertible \\ 

$3_{1}^m \# 5_{1}$ & Invertible \\ 

$3_{1}^m \# 5_{1}^m$ & Invertible \\ \addlinespace[0.491339876555172em]

$3_{1} \# 5_{2}$ & Invertible \\ 

$3_{1} \# 5_{2}^m$ & Invertible \\ 

$3_{1}^m \# 5_{2}$ & Invertible \\ 

$3_{1}^m \# 5_{2}^m$ & Invertible \\ \addlinespace[0.491339876555172em]

$4_{1} \# 4_{1}$ & Full \\ \addlinespace[0.491339876555172em]\cmidrule(r{0.5em}l{0.5em}){1-2}\addlinespace[0.491339876555172em]

$3_{1} \# 6_{1}$ & Invertible \\ 

$3_{1} \# 6_{1}^m$ & Invertible \\ 

$3_{1}^m \# 6_{1}$ & Invertible \\ 

$3_{1}^m \# 6_{1}^m$ & Invertible \\ \addlinespace[0.491339876555172em]

$3_{1} \# 6_{2}$ & Invertible \\ 

$3_{1} \# 6_{2}^m$ & Invertible \\ 

$3_{1}^m \# 6_{2}$ & Invertible \\ 

$3_{1}^m \# 6_{2}^m$ & Invertible \\ \addlinespace[0.491339876555172em]

$3_{1} \# 6_{3}$ & Invertible \\ 

$3_{1}^m \# 6_{3}$ & Invertible \\ \addlinespace[0.491339876555172em]

$4_{1} \# 5_{1}$ & Invertible \\ 

$4_{1} \# 5_{1}^m$ & Invertible \\ \addlinespace[0.491339876555172em]

$4_{1} \# 5_{2}$ & Invertible \\ 

$4_{1} \# 5_{2}^m$ & Invertible \\ \addlinespace[0.491339876555172em]

$3_{1} \# 3_{1} \# 3_{1}$ & Invertible \\ 

$3_{1} \# 3_{1} \# 3_{1}^m$ & Invertible \\ 

$3_{1} \# 3_{1}^m \# 3_{1}^m$ & Invertible \\ 

$3_{1}^m \# 3_{1}^m \# 3_{1}^m$ & Invertible \\ 
\bottomrule
\end{tabular}
\hfill\begin{tabular}[t]{l@{\hspace{0.1in}}r} \toprule
Knot & Symmetry\\ \midrule
$3_{1} \# 7_{1}$ & Invertible \\ 

$3_{1} \# 7_{1}^m$ & Invertible \\ 

$3_{1}^m \# 7_{1}$ & Invertible \\ 

$3_{1}^m \# 7_{1}^m$ & Invertible \\ \addlinespace[0.486673912699349em]

$3_{1} \# 7_{2}$ & Invertible \\ 

$3_{1} \# 7_{2}^m$ & Invertible \\ 

$3_{1}^m \# 7_{2}$ & Invertible \\ 

$3_{1}^m \# 7_{2}^m$ & Invertible \\ \addlinespace[0.486673912699349em]

$3_{1} \# 7_{3}$ & Invertible \\ 

$3_{1} \# 7_{3}^m$ & Invertible \\ 

$3_{1}^m \# 7_{3}$ & Invertible \\ 

$3_{1}^m \# 7_{3}^m$ & Invertible \\ \addlinespace[0.486673912699349em]

$3_{1} \# 7_{4}$ & Invertible \\ 

$3_{1} \# 7_{4}^m$ & Invertible \\ 

$3_{1}^m \# 7_{4}$ & Invertible \\ 

$3_{1}^m \# 7_{4}^m$ & Invertible \\ \addlinespace[0.486673912699349em]

$3_{1} \# 7_{5}$ & Invertible \\ 

$3_{1} \# 7_{5}^m$ & Invertible \\ 

$3_{1}^m \# 7_{5}$ & Invertible \\ 

$3_{1}^m \# 7_{5}^m$ & Invertible \\ \addlinespace[0.486673912699349em]

$3_{1} \# 7_{6}$ & Invertible \\ 

$3_{1} \# 7_{6}^m$ & Invertible \\ 

$3_{1}^m \# 7_{6}$ & Invertible \\ 

$3_{1}^m \# 7_{6}^m$ & Invertible \\ \addlinespace[0.486673912699349em]

$3_{1} \# 7_{7}$ & Invertible \\ 

$3_{1} \# 7_{7}^m$ & Invertible \\ 

$3_{1}^m \# 7_{7}$ & Invertible \\ 

$3_{1}^m \# 7_{7}^m$ & Invertible \\ \addlinespace[0.486673912699349em]

$4_{1} \# 6_{1}$ & Invertible \\ 

$4_{1} \# 6_{1}^m$ & Invertible \\ \addlinespace[0.486673912699349em]

$4_{1} \# 6_{2}$ & Invertible \\ 

$4_{1} \# 6_{2}^m$ & Invertible \\ \addlinespace[0.486673912699349em]

$4_{1} \# 6_{3}$ & Full \\ \addlinespace[0.486673912699349em]

$5_{1} \# 5_{1}$ & Invertible \\ 
\bottomrule
\end{tabular}
\hfill\begin{tabular}[t]{l@{\hspace{0.1in}}r} \toprule
Knot & Symmetry\\ \midrule
$5_{1} \# 5_{1}^m$ & Full \\ 

$5_{1}^m \# 5_{1}^m$ & Invertible \\ \addlinespace[0.455092817969205em]

$5_{1} \# 5_{2}$ & Invertible \\ 

$5_{1} \# 5_{2}^m$ & Invertible \\ 

$5_{1}^m \# 5_{2}$ & Invertible \\ 

$5_{1}^m \# 5_{2}^m$ & Invertible \\ \addlinespace[0.455092817969205em]

$5_{2} \# 5_{2}$ & Invertible \\ 

$5_{2} \# 5_{2}^m$ & Full \\ 

$5_{2}^m \# 5_{2}^m$ & Invertible \\ \addlinespace[0.455092817969205em]

$3_{1} \# 3_{1} \# 4_{1}$ & Invertible \\ 

$3_{1} \# 3_{1}^m \# 4_{1}$ & Full \\ 

$3_{1}^m \# 3_{1}^m \# 4_{1}$ & Invertible \\ \addlinespace[0.455092817969205em]\cmidrule(r{0.5em}l{0.5em}){1-2}\addlinespace[0.455092817969205em]

$3_{1} \# 8_{1}$ & Invertible \\ 

$3_{1} \# 8_{1}^m$ & Invertible \\ 

$3_{1}^m \# 8_{1}$ & Invertible \\ 

$3_{1}^m \# 8_{1}^m$ & Invertible \\ \addlinespace[0.455092817969205em]

$3_{1} \# 8_{2}$ & Invertible \\ 

$3_{1} \# 8_{2}^m$ & Invertible \\ 

$3_{1}^m \# 8_{2}$ & Invertible \\ 

$3_{1}^m \# 8_{2}^m$ & Invertible \\ \addlinespace[0.455092817969205em]

$3_{1} \# 8_{3}$ & Invertible \\ 

$3_{1}^m \# 8_{3}$ & Invertible \\ \addlinespace[0.455092817969205em]

$3_{1} \# 8_{4}$ & Invertible \\ 

$3_{1} \# 8_{4}^m$ & Invertible \\ 

$3_{1}^m \# 8_{4}$ & Invertible \\ 

$3_{1}^m \# 8_{4}^m$ & Invertible \\ \addlinespace[0.455092817969205em]

$3_{1} \# 8_{5}$ & Invertible \\ 

$3_{1} \# 8_{5}^m$ & Invertible \\ 

$3_{1}^m \# 8_{5}$ & Invertible \\ 

$3_{1}^m \# 8_{5}^m$ & Invertible \\ \addlinespace[0.455092817969205em]

$3_{1} \# 8_{6}$ & Invertible \\ 

$3_{1} \# 8_{6}^m$ & Invertible \\ 

$3_{1}^m \# 8_{6}$ & Invertible \\ 

$3_{1}^m \# 8_{6}^m$ & Invertible \\ 
\bottomrule
\end{tabular}
\hfill\hfill\end{center}
\end{table}

\newpage
\begin{table}[h]
 \caption{\label{CompositeKnotTable 1} Composite Knot Types, Part 2 of 6}

\begin{center}
\hfill\begin{tabular}[t]{l@{\hspace{0.1in}}r} \toprule
Knot & Symmetry\\ \midrule
$3_{1} \# 8_{7}$ & Invertible \\ 

$3_{1} \# 8_{7}^m$ & Invertible \\ 

$3_{1}^m \# 8_{7}$ & Invertible \\ 

$3_{1}^m \# 8_{7}^m$ & Invertible \\ \addlinespace[0.540748556249757em]

$3_{1} \# 8_{8}$ & Invertible \\ 

$3_{1} \# 8_{8}^m$ & Invertible \\ 

$3_{1}^m \# 8_{8}$ & Invertible \\ 

$3_{1}^m \# 8_{8}^m$ & Invertible \\ \addlinespace[0.540748556249757em]

$3_{1} \# 8_{9}$ & Invertible \\ 

$3_{1}^m \# 8_{9}$ & Invertible \\ \addlinespace[0.540748556249757em]

$3_{1} \# 8_{10}$ & Invertible \\ 

$3_{1} \# 8_{10}^m$ & Invertible \\ 

$3_{1}^m \# 8_{10}$ & Invertible \\ 

$3_{1}^m \# 8_{10}^m$ & Invertible \\ \addlinespace[0.540748556249757em]

$3_{1} \# 8_{11}$ & Invertible \\ 

$3_{1} \# 8_{11}^m$ & Invertible \\ 

$3_{1}^m \# 8_{11}$ & Invertible \\ 

$3_{1}^m \# 8_{11}^m$ & Invertible \\ \addlinespace[0.540748556249757em]

$3_{1} \# 8_{12}$ & Invertible \\ 

$3_{1}^m \# 8_{12}$ & Invertible \\ \addlinespace[0.540748556249757em]

$3_{1} \# 8_{13}$ & Invertible \\ 

$3_{1} \# 8_{13}^m$ & Invertible \\ 

$3_{1}^m \# 8_{13}$ & Invertible \\ 

$3_{1}^m \# 8_{13}^m$ & Invertible \\ \addlinespace[0.540748556249757em]

$3_{1} \# 8_{14}$ & Invertible \\ 

$3_{1} \# 8_{14}^m$ & Invertible \\ 

$3_{1}^m \# 8_{14}$ & Invertible \\ 

$3_{1}^m \# 8_{14}^m$ & Invertible \\ \addlinespace[0.540748556249757em]

$3_{1} \# 8_{15}$ & Invertible \\ 

$3_{1} \# 8_{15}^m$ & Invertible \\ 

$3_{1}^m \# 8_{15}$ & Invertible \\ 

$3_{1}^m \# 8_{15}^m$ & Invertible \\ \addlinespace[0.540748556249757em]

$3_{1} \# 8_{16}$ & Invertible \\ 

$3_{1} \# 8_{16}^m$ & Invertible \\ 
\bottomrule
\end{tabular}
\hfill\begin{tabular}[t]{l@{\hspace{0.1in}}r} \toprule
Knot & Symmetry\\ \midrule
$3_{1}^m \# 8_{16}$ & Invertible \\ 

$3_{1}^m \# 8_{16}^m$ & Invertible \\ \addlinespace[0.50840017917916em]

$3_{1} \# 8_{17}$ & None \\ 

$3_{1} \# 8_{17}^r$ & None \\ 

$3_{1}^m \# 8_{17}$ & None \\ 

$3_{1}^m \# 8_{17}^r$ & None \\ \addlinespace[0.50840017917916em]

$3_{1} \# 8_{18}$ & Invertible \\ 

$3_{1}^m \# 8_{18}$ & Invertible \\ \addlinespace[0.50840017917916em]

$3_{1} \# 8_{19}$ & Invertible \\ 

$3_{1} \# 8_{19}^m$ & Invertible \\ 

$3_{1}^m \# 8_{19}$ & Invertible \\ 

$3_{1}^m \# 8_{19}^m$ & Invertible \\ \addlinespace[0.50840017917916em]

$3_{1} \# 8_{20}$ & Invertible \\ 

$3_{1} \# 8_{20}^m$ & Invertible \\ 

$3_{1}^m \# 8_{20}$ & Invertible \\ 

$3_{1}^m \# 8_{20}^m$ & Invertible \\ \addlinespace[0.50840017917916em]

$3_{1} \# 8_{21}$ & Invertible \\ 

$3_{1} \# 8_{21}^m$ & Invertible \\ 

$3_{1}^m \# 8_{21}$ & Invertible \\ 

$3_{1}^m \# 8_{21}^m$ & Invertible \\ \addlinespace[0.50840017917916em]

$4_{1} \# 7_{1}$ & Invertible \\ 

$4_{1} \# 7_{1}^m$ & Invertible \\ \addlinespace[0.50840017917916em]

$4_{1} \# 7_{2}$ & Invertible \\ 

$4_{1} \# 7_{2}^m$ & Invertible \\ \addlinespace[0.50840017917916em]

$4_{1} \# 7_{3}$ & Invertible \\ 

$4_{1} \# 7_{3}^m$ & Invertible \\ \addlinespace[0.50840017917916em]

$4_{1} \# 7_{4}$ & Invertible \\ 

$4_{1} \# 7_{4}^m$ & Invertible \\ \addlinespace[0.50840017917916em]

$4_{1} \# 7_{5}$ & Invertible \\ 

$4_{1} \# 7_{5}^m$ & Invertible \\ \addlinespace[0.50840017917916em]

$4_{1} \# 7_{6}$ & Invertible \\ 

$4_{1} \# 7_{6}^m$ & Invertible \\ \addlinespace[0.50840017917916em]

$4_{1} \# 7_{7}$ & Invertible \\ 
\bottomrule
\end{tabular}
\hfill\begin{tabular}[t]{l@{\hspace{0.1in}}r} \toprule
Knot & Symmetry\\ \midrule
$4_{1} \# 7_{7}^m$ & Invertible \\ \addlinespace[0.540748556249757em]

$5_{1} \# 6_{1}$ & Invertible \\ 

$5_{1} \# 6_{1}^m$ & Invertible \\ 

$5_{1}^m \# 6_{1}$ & Invertible \\ 

$5_{1}^m \# 6_{1}^m$ & Invertible \\ \addlinespace[0.540748556249757em]

$5_{1} \# 6_{2}$ & Invertible \\ 

$5_{1} \# 6_{2}^m$ & Invertible \\ 

$5_{1}^m \# 6_{2}$ & Invertible \\ 

$5_{1}^m \# 6_{2}^m$ & Invertible \\ \addlinespace[0.540748556249757em]

$5_{1} \# 6_{3}$ & Invertible \\ 

$5_{1}^m \# 6_{3}$ & Invertible \\ \addlinespace[0.540748556249757em]

$5_{2} \# 6_{1}$ & Invertible \\ 

$5_{2} \# 6_{1}^m$ & Invertible \\ 

$5_{2}^m \# 6_{1}$ & Invertible \\ 

$5_{2}^m \# 6_{1}^m$ & Invertible \\ \addlinespace[0.540748556249757em]

$5_{2} \# 6_{2}$ & Invertible \\ 

$5_{2} \# 6_{2}^m$ & Invertible \\ 

$5_{2}^m \# 6_{2}$ & Invertible \\ 

$5_{2}^m \# 6_{2}^m$ & Invertible \\ \addlinespace[0.540748556249757em]

$5_{2} \# 6_{3}$ & Invertible \\ 

$5_{2}^m \# 6_{3}$ & Invertible \\ \addlinespace[0.540748556249757em]

$3_{1} \# 3_{1} \# 5_{1}$ & Invertible \\ 

$3_{1} \# 3_{1} \# 5_{1}^m$ & Invertible \\ 

$3_{1} \# 3_{1}^m \# 5_{1}$ & Invertible \\ 

$3_{1} \# 3_{1}^m \# 5_{1}^m$ & Invertible \\ 

$3_{1}^m \# 3_{1}^m \# 5_{1}$ & Invertible \\ 

$3_{1}^m \# 3_{1}^m \# 5_{1}^m$ & Invertible \\ \addlinespace[0.540748556249757em]

$3_{1} \# 3_{1} \# 5_{2}$ & Invertible \\ 

$3_{1} \# 3_{1} \# 5_{2}^m$ & Invertible \\ 

$3_{1} \# 3_{1}^m \# 5_{2}$ & Invertible \\ 

$3_{1} \# 3_{1}^m \# 5_{2}^m$ & Invertible \\ 

$3_{1}^m \# 3_{1}^m \# 5_{2}$ & Invertible \\ 

$3_{1}^m \# 3_{1}^m \# 5_{2}^m$ & Invertible \\ \addlinespace[0.540748556249757em]

$3_{1} \# 4_{1} \# 4_{1}$ & Invertible \\ 
\bottomrule
\end{tabular}
\hfill\hfill\end{center}
\end{table}

\newpage
\begin{table}[h]
 \caption{\label{CompositeKnotTable 2} Composite Knot Types, Part 3 of 6}

\begin{center}
\hfill\begin{tabular}[t]{l@{\hspace{0.1in}}r} \toprule
Knot & Symmetry\\ \midrule
$3_{1}^m \# 4_{1} \# 4_{1}$ & Invertible \\ \addlinespace[0.455092817969205em]\cmidrule(r{0.5em}l{0.5em}){1-2}\addlinespace[0.455092817969205em]

$3_{1} \# 9_{1}$ & Invertible \\ 

$3_{1} \# 9_{1}^m$ & Invertible \\ 

$3_{1}^m \# 9_{1}$ & Invertible \\ 

$3_{1}^m \# 9_{1}^m$ & Invertible \\ \addlinespace[0.455092817969205em]

$3_{1} \# 9_{2}$ & Invertible \\ 

$3_{1} \# 9_{2}^m$ & Invertible \\ 

$3_{1}^m \# 9_{2}$ & Invertible \\ 

$3_{1}^m \# 9_{2}^m$ & Invertible \\ \addlinespace[0.455092817969205em]

$3_{1} \# 9_{3}$ & Invertible \\ 

$3_{1} \# 9_{3}^m$ & Invertible \\ 

$3_{1}^m \# 9_{3}$ & Invertible \\ 

$3_{1}^m \# 9_{3}^m$ & Invertible \\ \addlinespace[0.455092817969205em]

$3_{1} \# 9_{4}$ & Invertible \\ 

$3_{1} \# 9_{4}^m$ & Invertible \\ 

$3_{1}^m \# 9_{4}$ & Invertible \\ 

$3_{1}^m \# 9_{4}^m$ & Invertible \\ \addlinespace[0.455092817969205em]

$3_{1} \# 9_{5}$ & Invertible \\ 

$3_{1} \# 9_{5}^m$ & Invertible \\ 

$3_{1}^m \# 9_{5}$ & Invertible \\ 

$3_{1}^m \# 9_{5}^m$ & Invertible \\ \addlinespace[0.455092817969205em]

$3_{1} \# 9_{6}$ & Invertible \\ 

$3_{1} \# 9_{6}^m$ & Invertible \\ 

$3_{1}^m \# 9_{6}$ & Invertible \\ 

$3_{1}^m \# 9_{6}^m$ & Invertible \\ \addlinespace[0.455092817969205em]

$3_{1} \# 9_{7}$ & Invertible \\ 

$3_{1} \# 9_{7}^m$ & Invertible \\ 

$3_{1}^m \# 9_{7}$ & Invertible \\ 

$3_{1}^m \# 9_{7}^m$ & Invertible \\ \addlinespace[0.455092817969205em]

$3_{1} \# 9_{8}$ & Invertible \\ 

$3_{1} \# 9_{8}^m$ & Invertible \\ 

$3_{1}^m \# 9_{8}$ & Invertible \\ 

$3_{1}^m \# 9_{8}^m$ & Invertible \\ \addlinespace[0.455092817969205em]

$3_{1} \# 9_{9}$ & Invertible \\ 
\bottomrule
\end{tabular}
\hfill\begin{tabular}[t]{l@{\hspace{0.1in}}r} \toprule
Knot & Symmetry\\ \midrule
$3_{1} \# 9_{9}^m$ & Invertible \\ 

$3_{1}^m \# 9_{9}$ & Invertible \\ 

$3_{1}^m \# 9_{9}^m$ & Invertible \\ \addlinespace[0.454084935335509em]

$3_{1} \# 9_{10}$ & Invertible \\ 

$3_{1} \# 9_{10}^m$ & Invertible \\ 

$3_{1}^m \# 9_{10}$ & Invertible \\ 

$3_{1}^m \# 9_{10}^m$ & Invertible \\ \addlinespace[0.454084935335509em]

$3_{1} \# 9_{11}$ & Invertible \\ 

$3_{1} \# 9_{11}^m$ & Invertible \\ 

$3_{1}^m \# 9_{11}$ & Invertible \\ 

$3_{1}^m \# 9_{11}^m$ & Invertible \\ \addlinespace[0.454084935335509em]

$3_{1} \# 9_{12}$ & Invertible \\ 

$3_{1} \# 9_{12}^m$ & Invertible \\ 

$3_{1}^m \# 9_{12}$ & Invertible \\ 

$3_{1}^m \# 9_{12}^m$ & Invertible \\ \addlinespace[0.454084935335509em]

$3_{1} \# 9_{13}$ & Invertible \\ 

$3_{1} \# 9_{13}^m$ & Invertible \\ 

$3_{1}^m \# 9_{13}$ & Invertible \\ 

$3_{1}^m \# 9_{13}^m$ & Invertible \\ \addlinespace[0.454084935335509em]

$3_{1} \# 9_{14}$ & Invertible \\ 

$3_{1} \# 9_{14}^m$ & Invertible \\ 

$3_{1}^m \# 9_{14}$ & Invertible \\ 

$3_{1}^m \# 9_{14}^m$ & Invertible \\ \addlinespace[0.454084935335509em]

$3_{1} \# 9_{15}$ & Invertible \\ 

$3_{1} \# 9_{15}^m$ & Invertible \\ 

$3_{1}^m \# 9_{15}$ & Invertible \\ 

$3_{1}^m \# 9_{15}^m$ & Invertible \\ \addlinespace[0.454084935335509em]

$3_{1} \# 9_{16}$ & Invertible \\ 

$3_{1} \# 9_{16}^m$ & Invertible \\ 

$3_{1}^m \# 9_{16}$ & Invertible \\ 

$3_{1}^m \# 9_{16}^m$ & Invertible \\ \addlinespace[0.454084935335509em]

$3_{1} \# 9_{17}$ & Invertible \\ 

$3_{1} \# 9_{17}^m$ & Invertible \\ 

$3_{1}^m \# 9_{17}$ & Invertible \\ 

$3_{1}^m \# 9_{17}^m$ & Invertible \\ 
\bottomrule
\end{tabular}
\hfill\begin{tabular}[t]{l@{\hspace{0.1in}}r} \toprule
Knot & Symmetry\\ \midrule
$3_{1} \# 9_{18}$ & Invertible \\ 

$3_{1} \# 9_{18}^m$ & Invertible \\ 

$3_{1}^m \# 9_{18}$ & Invertible \\ 

$3_{1}^m \# 9_{18}^m$ & Invertible \\ \addlinespace[0.454084935335509em]

$3_{1} \# 9_{19}$ & Invertible \\ 

$3_{1} \# 9_{19}^m$ & Invertible \\ 

$3_{1}^m \# 9_{19}$ & Invertible \\ 

$3_{1}^m \# 9_{19}^m$ & Invertible \\ \addlinespace[0.454084935335509em]

$3_{1} \# 9_{20}$ & Invertible \\ 

$3_{1} \# 9_{20}^m$ & Invertible \\ 

$3_{1}^m \# 9_{20}$ & Invertible \\ 

$3_{1}^m \# 9_{20}^m$ & Invertible \\ \addlinespace[0.454084935335509em]

$3_{1} \# 9_{21}$ & Invertible \\ 

$3_{1} \# 9_{21}^m$ & Invertible \\ 

$3_{1}^m \# 9_{21}$ & Invertible \\ 

$3_{1}^m \# 9_{21}^m$ & Invertible \\ \addlinespace[0.454084935335509em]

$3_{1} \# 9_{22}$ & Invertible \\ 

$3_{1} \# 9_{22}^m$ & Invertible \\ 

$3_{1}^m \# 9_{22}$ & Invertible \\ 

$3_{1}^m \# 9_{22}^m$ & Invertible \\ \addlinespace[0.454084935335509em]

$3_{1} \# 9_{23}$ & Invertible \\ 

$3_{1} \# 9_{23}^m$ & Invertible \\ 

$3_{1}^m \# 9_{23}$ & Invertible \\ 

$3_{1}^m \# 9_{23}^m$ & Invertible \\ \addlinespace[0.454084935335509em]

$3_{1} \# 9_{24}$ & Invertible \\ 

$3_{1} \# 9_{24}^m$ & Invertible \\ 

$3_{1}^m \# 9_{24}$ & Invertible \\ 

$3_{1}^m \# 9_{24}^m$ & Invertible \\ \addlinespace[0.454084935335509em]

$3_{1} \# 9_{25}$ & Invertible \\ 

$3_{1} \# 9_{25}^m$ & Invertible \\ 

$3_{1}^m \# 9_{25}$ & Invertible \\ 

$3_{1}^m \# 9_{25}^m$ & Invertible \\ \addlinespace[0.454084935335509em]

$3_{1} \# 9_{26}$ & Invertible \\ 

$3_{1} \# 9_{26}^m$ & Invertible \\ 

$3_{1}^m \# 9_{26}$ & Invertible \\ 
\bottomrule
\end{tabular}
\hfill\hfill\end{center}
\end{table}

\newpage
\begin{table}[h]
 \caption{\label{CompositeKnotTable 3} Composite Knot Types, Part 4 of 6}

\begin{center}
\hfill\begin{tabular}[t]{l@{\hspace{0.1in}}r} \toprule
Knot & Symmetry\\ \midrule
$3_{1}^m \# 9_{26}^m$ & Invertible \\ \addlinespace[0.518953409372206em]

$3_{1} \# 9_{27}$ & Invertible \\ 

$3_{1} \# 9_{27}^m$ & Invertible \\ 

$3_{1}^m \# 9_{27}$ & Invertible \\ 

$3_{1}^m \# 9_{27}^m$ & Invertible \\ \addlinespace[0.518953409372206em]

$3_{1} \# 9_{28}$ & Invertible \\ 

$3_{1} \# 9_{28}^m$ & Invertible \\ 

$3_{1}^m \# 9_{28}$ & Invertible \\ 

$3_{1}^m \# 9_{28}^m$ & Invertible \\ \addlinespace[0.518953409372206em]

$3_{1} \# 9_{29}$ & Invertible \\ 

$3_{1} \# 9_{29}^m$ & Invertible \\ 

$3_{1}^m \# 9_{29}$ & Invertible \\ 

$3_{1}^m \# 9_{29}^m$ & Invertible \\ \addlinespace[0.518953409372206em]

$3_{1} \# 9_{30}$ & Invertible \\ 

$3_{1} \# 9_{30}^m$ & Invertible \\ 

$3_{1}^m \# 9_{30}$ & Invertible \\ 

$3_{1}^m \# 9_{30}^m$ & Invertible \\ \addlinespace[0.518953409372206em]

$3_{1} \# 9_{31}$ & Invertible \\ 

$3_{1} \# 9_{31}^m$ & Invertible \\ 

$3_{1}^m \# 9_{31}$ & Invertible \\ 

$3_{1}^m \# 9_{31}^m$ & Invertible \\ \addlinespace[0.518953409372206em]

$3_{1} \# 9_{32}$ & None \\ 

$3_{1} \# 9_{32}^r$ & None \\ 

$3_{1} \# 9_{32}^m$ & None \\ 

$3_{1} \# 9_{32}^{rm}$ & None \\ 

$3_{1}^m \# 9_{32}$ & None \\ 

$3_{1}^m \# 9_{32}^r$ & None \\ 

$3_{1}^m \# 9_{32}^m$ & None \\ 

$3_{1}^m \# 9_{32}^{rm}$ & None \\ \addlinespace[0.518953409372206em]

$3_{1} \# 9_{33}$ & None \\ 

$3_{1} \# 9_{33}^r$ & None \\ 

$3_{1} \# 9_{33}^m$ & None \\ 

$3_{1} \# 9_{33}^{rm}$ & None \\ 

$3_{1}^m \# 9_{33}$ & None \\ 

$3_{1}^m \# 9_{33}^r$ & None \\

\bottomrule
\end{tabular}
\hfill\begin{tabular}[t]{l@{\hspace{0.1in}}r} \toprule
Knot & Symmetry\\ \midrule
$3_{1}^m \# 9_{33}^m$ & None \\ 

$3_{1}^m \# 9_{33}^{rm}$ & None \\ \addlinespace[0.608341438027759em]

$3_{1} \# 9_{34}$ & Invertible \\ 

$3_{1} \# 9_{34}^m$ & Invertible \\ 

$3_{1}^m \# 9_{34}$ & Invertible \\ 

$3_{1}^m \# 9_{34}^m$ & Invertible \\ \addlinespace[0.608341438027759em]

$3_{1} \# 9_{35}$ & Invertible \\ 

$3_{1} \# 9_{35}^m$ & Invertible \\ 

$3_{1}^m \# 9_{35}$ & Invertible \\ 

$3_{1}^m \# 9_{35}^m$ & Invertible \\ \addlinespace[0.608341438027759em]

$3_{1} \# 9_{36}$ & Invertible \\ 

$3_{1} \# 9_{36}^m$ & Invertible \\ 

$3_{1}^m \# 9_{36}$ & Invertible \\ 

$3_{1}^m \# 9_{36}^m$ & Invertible \\ \addlinespace[0.608341438027759em]

$3_{1} \# 9_{37}$ & Invertible \\ 

$3_{1} \# 9_{37}^m$ & Invertible \\ 

$3_{1}^m \# 9_{37}$ & Invertible \\ 

$3_{1}^m \# 9_{37}^m$ & Invertible \\ \addlinespace[0.608341438027759em]

$3_{1} \# 9_{38}$ & Invertible \\ 

$3_{1} \# 9_{38}^m$ & Invertible \\ 

$3_{1}^m \# 9_{38}$ & Invertible \\ 

$3_{1}^m \# 9_{38}^m$ & Invertible \\ \addlinespace[0.608341438027759em]

$3_{1} \# 9_{39}$ & Invertible \\ 

$3_{1} \# 9_{39}^m$ & Invertible \\ 

$3_{1}^m \# 9_{39}$ & Invertible \\ 

$3_{1}^m \# 9_{39}^m$ & Invertible \\ \addlinespace[0.608341438027759em]

$3_{1} \# 9_{40}$ & Invertible \\ 

$3_{1} \# 9_{40}^m$ & Invertible \\ 

$3_{1}^m \# 9_{40}$ & Invertible \\ 

$3_{1}^m \# 9_{40}^m$ & Invertible \\ \addlinespace[0.608341438027759em]

$3_{1} \# 9_{41}$ & Invertible \\ 

$3_{1} \# 9_{41}^m$ & Invertible \\ 

$3_{1}^m \# 9_{41}$ & Invertible \\ 

$3_{1}^m \# 9_{41}^m$ & Invertible \\ 
\bottomrule
\end{tabular}
\hfill\begin{tabular}[t]{l@{\hspace{0.1in}}r} \toprule
Knot & Symmetry\\ \midrule
$3_{1} \# 9_{42}$ & Invertible \\ 

$3_{1} \# 9_{42}^m$ & Invertible \\ 

$3_{1}^m \# 9_{42}$ & Invertible \\ 

$3_{1}^m \# 9_{42}^m$ & Invertible \\ \addlinespace[0.608341438027759em]

$3_{1} \# 9_{43}$ & Invertible \\ 

$3_{1} \# 9_{43}^m$ & Invertible \\ 

$3_{1}^m \# 9_{43}$ & Invertible \\ 

$3_{1}^m \# 9_{43}^m$ & Invertible \\ \addlinespace[0.608341438027759em]

$3_{1} \# 9_{44}$ & Invertible \\ 

$3_{1} \# 9_{44}^m$ & Invertible \\ 

$3_{1}^m \# 9_{44}$ & Invertible \\ 

$3_{1}^m \# 9_{44}^m$ & Invertible \\ \addlinespace[0.608341438027759em]

$3_{1} \# 9_{45}$ & Invertible \\ 

$3_{1} \# 9_{45}^m$ & Invertible \\ 

$3_{1}^m \# 9_{45}$ & Invertible \\ 

$3_{1}^m \# 9_{45}^m$ & Invertible \\ \addlinespace[0.608341438027759em]

$3_{1} \# 9_{46}$ & Invertible \\ 

$3_{1} \# 9_{46}^m$ & Invertible \\ 

$3_{1}^m \# 9_{46}$ & Invertible \\ 

$3_{1}^m \# 9_{46}^m$ & Invertible \\ \addlinespace[0.608341438027759em]

$3_{1} \# 9_{47}$ & Invertible \\ 

$3_{1} \# 9_{47}^m$ & Invertible \\ 

$3_{1}^m \# 9_{47}$ & Invertible \\ 

$3_{1}^m \# 9_{47}^m$ & Invertible \\ \addlinespace[0.608341438027759em]

$3_{1} \# 9_{48}$ & Invertible \\ 

$3_{1} \# 9_{48}^m$ & Invertible \\ 

$3_{1}^m \# 9_{48}$ & Invertible \\ 

$3_{1}^m \# 9_{48}^m$ & Invertible \\ \addlinespace[0.608341438027759em]

$3_{1} \# 9_{49}$ & Invertible \\ 

$3_{1} \# 9_{49}^m$ & Invertible \\ 

$3_{1}^m \# 9_{49}$ & Invertible \\ 

$3_{1}^m \# 9_{49}^m$ & Invertible \\ \addlinespace[0.608341438027759em]

$4_{1} \# 8_{1}$ & Invertible \\ 

$4_{1} \# 8_{1}^m$ & Invertible \\ 
\bottomrule
\end{tabular}
\hfill\hfill\end{center}
\end{table}

\newpage
\begin{table}[h]
 \caption{\label{CompositeKnotTable 4} Composite Knot Types, Part 5 of 6}

\begin{center}
\hfill\begin{tabular}[t]{l@{\hspace{0.1in}}r} \toprule
Knot & Symmetry\\ \midrule
$4_{1} \# 8_{2}$ & Invertible \\ 

$4_{1} \# 8_{2}^m$ & Invertible \\ \addlinespace[0.504054630456227em]

$4_{1} \# 8_{3}$ & Full \\ \addlinespace[0.504054630456227em]

$4_{1} \# 8_{4}$ & Invertible \\ 

$4_{1} \# 8_{4}^m$ & Invertible \\ \addlinespace[0.504054630456227em]

$4_{1} \# 8_{5}$ & Invertible \\ 

$4_{1} \# 8_{5}^m$ & Invertible \\ \addlinespace[0.504054630456227em]

$4_{1} \# 8_{6}$ & Invertible \\ 

$4_{1} \# 8_{6}^m$ & Invertible \\ \addlinespace[0.504054630456227em]

$4_{1} \# 8_{7}$ & Invertible \\ 

$4_{1} \# 8_{7}^m$ & Invertible \\ \addlinespace[0.504054630456227em]

$4_{1} \# 8_{8}$ & Invertible \\ 

$4_{1} \# 8_{8}^m$ & Invertible \\ \addlinespace[0.504054630456227em]

$4_{1} \# 8_{9}$ & Full \\ \addlinespace[0.504054630456227em]

$4_{1} \# 8_{10}$ & Invertible \\ 

$4_{1} \# 8_{10}^m$ & Invertible \\ \addlinespace[0.504054630456227em]

$4_{1} \# 8_{11}$ & Invertible \\ 

$4_{1} \# 8_{11}^m$ & Invertible \\ \addlinespace[0.504054630456227em]

$4_{1} \# 8_{12}$ & Full \\ \addlinespace[0.504054630456227em]

$4_{1} \# 8_{13}$ & Invertible \\ 

$4_{1} \# 8_{13}^m$ & Invertible \\ \addlinespace[0.504054630456227em]

$4_{1} \# 8_{14}$ & Invertible \\ 

$4_{1} \# 8_{14}^m$ & Invertible \\ \addlinespace[0.504054630456227em]

$4_{1} \# 8_{15}$ & Invertible \\ 

$4_{1} \# 8_{15}^m$ & Invertible \\ \addlinespace[0.504054630456227em]

$4_{1} \# 8_{16}$ & Invertible \\ 

$4_{1} \# 8_{16}^m$ & Invertible \\ \addlinespace[0.504054630456227em]

$4_{1} \# 8_{17}$ & (-) Amphichiral \\ 

$4_{1} \# 8_{17}^r$ & (-) Amphichiral \\ \addlinespace[0.504054630456227em]

$4_{1} \# 8_{18}$ & Full \\ \addlinespace[0.504054630456227em]

$4_{1} \# 8_{19}$ & Invertible \\ 
\bottomrule
\end{tabular}
\hfill\begin{tabular}[t]{l@{\hspace{0.1in}}r} \toprule
Knot & Symmetry\\ \midrule
$4_{1} \# 8_{19}^m$ & Invertible \\ \addlinespace[0.486673912699349em]

$4_{1} \# 8_{20}$ & Invertible \\ 

$4_{1} \# 8_{20}^m$ & Invertible \\ \addlinespace[0.486673912699349em]

$4_{1} \# 8_{21}$ & Invertible \\ 

$4_{1} \# 8_{21}^m$ & Invertible \\ \addlinespace[0.486673912699349em]

$5_{1} \# 7_{1}$ & Invertible \\ 

$5_{1} \# 7_{1}^m$ & Invertible \\ 

$5_{1}^m \# 7_{1}$ & Invertible \\ 

$5_{1}^m \# 7_{1}^m$ & Invertible \\ \addlinespace[0.486673912699349em]

$5_{1} \# 7_{2}$ & Invertible \\ 

$5_{1} \# 7_{2}^m$ & Invertible \\ 

$5_{1}^m \# 7_{2}$ & Invertible \\ 

$5_{1}^m \# 7_{2}^m$ & Invertible \\ \addlinespace[0.486673912699349em]

$5_{1} \# 7_{3}$ & Invertible \\ 

$5_{1} \# 7_{3}^m$ & Invertible \\ 

$5_{1}^m \# 7_{3}$ & Invertible \\ 

$5_{1}^m \# 7_{3}^m$ & Invertible \\ \addlinespace[0.486673912699349em]

$5_{1} \# 7_{4}$ & Invertible \\ 

$5_{1} \# 7_{4}^m$ & Invertible \\ 

$5_{1}^m \# 7_{4}$ & Invertible \\ 

$5_{1}^m \# 7_{4}^m$ & Invertible \\ \addlinespace[0.486673912699349em]

$5_{1} \# 7_{5}$ & Invertible \\ 

$5_{1} \# 7_{5}^m$ & Invertible \\ 

$5_{1}^m \# 7_{5}$ & Invertible \\ 

$5_{1}^m \# 7_{5}^m$ & Invertible \\ \addlinespace[0.486673912699349em]

$5_{1} \# 7_{6}$ & Invertible \\ 

$5_{1} \# 7_{6}^m$ & Invertible \\ 

$5_{1}^m \# 7_{6}$ & Invertible \\ 

$5_{1}^m \# 7_{6}^m$ & Invertible \\ \addlinespace[0.486673912699349em]

$5_{1} \# 7_{7}$ & Invertible \\ 

$5_{1} \# 7_{7}^m$ & Invertible \\ 

$5_{1}^m \# 7_{7}$ & Invertible \\ 

$5_{1}^m \# 7_{7}^m$ & Invertible \\ \addlinespace[0.486673912699349em]

$5_{2} \# 7_{1}$ & Invertible \\ 
\bottomrule
\end{tabular}
\hfill\begin{tabular}[t]{l@{\hspace{0.1in}}r} \toprule
Knot & Symmetry\\ \midrule
$5_{2} \# 7_{1}^m$ & Invertible \\ 

$5_{2}^m \# 7_{1}$ & Invertible \\ 

$5_{2}^m \# 7_{1}^m$ & Invertible \\ \addlinespace[0.608341438027759em]

$5_{2} \# 7_{2}$ & Invertible \\ 

$5_{2} \# 7_{2}^m$ & Invertible \\ 

$5_{2}^m \# 7_{2}$ & Invertible \\ 

$5_{2}^m \# 7_{2}^m$ & Invertible \\ \addlinespace[0.608341438027759em]

$5_{2} \# 7_{3}$ & Invertible \\ 

$5_{2} \# 7_{3}^m$ & Invertible \\ 

$5_{2}^m \# 7_{3}$ & Invertible \\ 

$5_{2}^m \# 7_{3}^m$ & Invertible \\ \addlinespace[0.608341438027759em]

$5_{2} \# 7_{4}$ & Invertible \\ 

$5_{2} \# 7_{4}^m$ & Invertible \\ 

$5_{2}^m \# 7_{4}$ & Invertible \\ 

$5_{2}^m \# 7_{4}^m$ & Invertible \\ \addlinespace[0.608341438027759em]

$5_{2} \# 7_{5}$ & Invertible \\ 

$5_{2} \# 7_{5}^m$ & Invertible \\ 

$5_{2}^m \# 7_{5}$ & Invertible \\ 

$5_{2}^m \# 7_{5}^m$ & Invertible \\ \addlinespace[0.608341438027759em]

$5_{2} \# 7_{6}$ & Invertible \\ 

$5_{2} \# 7_{6}^m$ & Invertible \\ 

$5_{2}^m \# 7_{6}$ & Invertible \\ 

$5_{2}^m \# 7_{6}^m$ & Invertible \\ \addlinespace[0.608341438027759em]

$5_{2} \# 7_{7}$ & Invertible \\ 

$5_{2} \# 7_{7}^m$ & Invertible \\ 

$5_{2}^m \# 7_{7}$ & Invertible \\ 

$5_{2}^m \# 7_{7}^m$ & Invertible \\ \addlinespace[0.608341438027759em]

$6_{1} \# 6_{1}$ & Invertible \\ 

$6_{1} \# 6_{1}^m$ & Full \\ 

$6_{1}^m \# 6_{1}^m$ & Invertible \\ \addlinespace[0.608341438027759em]

$6_{1} \# 6_{2}$ & Invertible \\ 

$6_{1} \# 6_{2}^m$ & Invertible \\ 

$6_{1}^m \# 6_{2}$ & Invertible \\ 

$6_{1}^m \# 6_{2}^m$ & Invertible \\ 
\bottomrule
\end{tabular}
\hfill\hfill\end{center}
\end{table}

\newpage
\begin{table}[h]
 \caption{\label{CompositeKnotTable 5} Composite Knot Types, Part 6 of 6}

\begin{center}
\hfill\begin{tabular}[t]{l@{\hspace{0.1in}}r} \toprule
Knot & Symmetry\\ \midrule
$6_{1} \# 6_{3}$ & Invertible \\ 

$6_{1}^m \# 6_{3}$ & Invertible \\ \addlinespace[0.486673912699349em]

$6_{2} \# 6_{2}$ & Invertible \\ 

$6_{2} \# 6_{2}^m$ & Full \\ 

$6_{2}^m \# 6_{2}^m$ & Invertible \\ \addlinespace[0.486673912699349em]

$6_{2} \# 6_{3}$ & Invertible \\ 

$6_{2}^m \# 6_{3}$ & Invertible \\ \addlinespace[0.486673912699349em]

$6_{3} \# 6_{3}$ & Full \\ \addlinespace[0.486673912699349em]

$3_{1} \# 3_{1} \# 6_{1}$ & Invertible \\ 

$3_{1} \# 3_{1} \# 6_{1}^m$ & Invertible \\ 

$3_{1} \# 3_{1}^m \# 6_{1}$ & Invertible \\ 

$3_{1} \# 3_{1}^m \# 6_{1}^m$ & Invertible \\ 

$3_{1}^m \# 3_{1}^m \# 6_{1}$ & Invertible \\ 

$3_{1}^m \# 3_{1}^m \# 6_{1}^m$ & Invertible \\ \addlinespace[0.486673912699349em]

$3_{1} \# 3_{1} \# 6_{2}$ & Invertible \\ 

$3_{1} \# 3_{1} \# 6_{2}^m$ & Invertible \\ 

$3_{1} \# 3_{1}^m \# 6_{2}$ & Invertible \\ 

$3_{1} \# 3_{1}^m \# 6_{2}^m$ & Invertible \\ 

$3_{1}^m \# 3_{1}^m \# 6_{2}$ & Invertible \\ 

$3_{1}^m \# 3_{1}^m \# 6_{2}^m$ & Invertible \\ \addlinespace[0.486673912699349em]

$3_{1} \# 3_{1} \# 6_{3}$ & Invertible \\ 

$3_{1} \# 3_{1}^m \# 6_{3}$ & Full \\ 

$3_{1}^m \# 3_{1}^m \# 6_{3}$ & Invertible \\ \addlinespace[0.486673912699349em]

$3_{1} \# 4_{1} \# 5_{1}$ & Invertible \\ 

$3_{1} \# 4_{1} \# 5_{1}^m$ & Invertible \\ 

$3_{1}^m \# 4_{1} \# 5_{1}$ & Invertible \\ 

$3_{1}^m \# 4_{1} \# 5_{1}^m$ & Invertible \\ \addlinespace[0.486673912699349em]

$3_{1} \# 4_{1} \# 5_{2}$ & Invertible \\ 

$3_{1} \# 4_{1} \# 5_{2}^m$ & Invertible \\ 

$3_{1}^m \# 4_{1} \# 5_{2}$ & Invertible \\ 

$3_{1}^m \# 4_{1} \# 5_{2}^m$ & Invertible \\ \addlinespace[0.486673912699349em]

$4_{1} \# 4_{1} \# 4_{1}$ & Full \\ \addlinespace[0.486673912699349em]

$3_{1} \# 3_{1} \# 3_{1} \# 3_{1}$ & Invertible \\ 

$3_{1} \# 3_{1} \# 3_{1} \# 3_{1}^m$ & Invertible \\ 
\bottomrule
\end{tabular}
\hfill\begin{tabular}[t]{l@{\hspace{0.1in}}r} \toprule
Knot & Symmetry\\ \midrule
$3_{1} \# 3_{1} \# 3_{1}^m \# 3_{1}^m$ & Full \\ \addlinespace[0.1em]
$3_{1} \# 3_{1}^m \# 3_{1}^m \# 3_{1}^m$ & Invertible \\ \addlinespace[0.1em]
$3_{1}^m \# 3_{1}^m \# 3_{1}^m \# 3_{1}^m$ & Invertible
 \\ 
\bottomrule
\end{tabular}
\hfill\hfill\end{center}
\end{table}

\end{document}